\newtheorem{thm}{Theorem~}
\newtheorem{lem}[thm]{Lemma~}
\theoremstyle{remark}
\newtheorem{rem}[thm]{Remark~}
\newtheoremstyle{specthm}{1.5ex plus 1ex minus .2ex}{1.5ex plus 1ex minus .2ex}{\it}{}{\bf}{}{1em}{\thmnote{#3}}
\theoremstyle{specthm}
\def\R{{\mathbb R}}
\def\a{\alpha}
\def\d{\delta}
\def\l{\lambda}
\def\e{\varepsilon}
\newcommand{\lpa}{L^{1,p}(\R^n)}                        
\newcommand{\pap}[1]{P^{1,p}(\R^{#1})}                  
\newcommand{\sobo}[2]{W^{#1,#2}(\R^n)}                  
\newcommand{\lbv}{BV_{loc}(\R)}                         
\newcommand{\B}{\mathcal{B}_p}                          
\newcommand{\G}{\mathcal{G}_p}                          
\newcommand{\GL}[1]{\mathrm{GL}({#1})}                  
\newcommand{\SL}[1]{\mathrm{SL}({#1})}                  
\newcommand{\abs}[1]{\left\vert{#1}\right\vert}         
\newcommand{\absx}[1]{\vert{#1}\vert}                   
\newcommand{\set}[1]{\left\{{#1}\right\}}               
\newcommand{\setx}[1]{\{{#1}\}}                         
\newcommand{\inp}[1]{\left\langle{#1}\right\rangle}     
\newcommand{\cb}{\mathcal{K}^n}                         
\newcommand{\poly}[1]{\mathcal{P}_0^{#1}}               
\newcommand{\normk}[2]{\left\Vert{#2}\right\Vert_{#1}}  
\newcommand{\normkx}[2]{\Vert{#2}\Vert_{#1}}            
\newcommand{\fijk}{f_{i_{j_{k}}}}                       
\newcommand{\Dzlp}[1]{D^{#1}z_{\ell_P}(s)}
\begin{document}


\title[Real-valued valuations on Sobolev spaces]
{Real-valued valuations on Sobolev spaces}

\author{Dan Ma}

\address{Institut f\"{u}r Diskrete Mathematik und Geometrie\\
Technische Universit\"{a}t Wien\\
Wiedner Hauptstrasse 8--10/1046, Wien 1040, Austria}

\email{madan516@gmail.com}

\date{\today}

\subjclass[2010]{46B20, 46E35, 52A21, 52B45}

\begin{abstract}
    Continuous, $\SL{n}$ and translation invariant
    real-valued valuations on Sobolev spaces are classified.
\end{abstract}
\maketitle


\section{Introduction}

A function $z$ defined on a lattice $(\mathcal{L},\vee,\wedge)$
and taking values in an abelian semigroup is called a \emph{valuation} if
\begin{equation}\label{eqn:val}
    z(f\vee g)+z(f\wedge g)=z(f)+z(g)
\end{equation}
for all $f,g\in\mathcal{L}$. A function $z$ defined on some subset $\mathcal{M}$ of $\mathcal{L}$
is called a valuation on $\mathcal{M}$ if \eqref{eqn:val} holds whenever
$f,g,f\vee g,f\wedge g\in\mathcal{M}$.
Valuations were a key part of Dehn's solution of Hilbert's Third Problem in 1901.
They are closely related to dissections and lie at the very heart of geo\-metry.
Here, valuations were considered on the \mbox{space} of convex bodies
(i.e. compact convex sets) in $\R^n$, denoted by $\cb$.
Perhaps the most famous result is Hadwiger's \mbox{characterization} theorem
on this space which classifies all continuous and rigid \mbox{motion} invariant real-valued valuations.
Important later \mbox{contributions} can be found in \cite{Had57,KR97,McM93,McS83}.
As for recent results, we refer to
\cite{Ale99,Ale01,Hab09,Hab12a,Hab12b,HP14a,HL06,Kla96,Kla97,LYL15,Lud02b,Lud03,Lud06,LR99,LR10,Par14a,Par14b,SS06,Sch08,SW12,Wan11}.
For later reference, we state here a centro-affine version of
Hadwiger's characterization theorem on the space of convex polytopes
containing the origin in their interiors, which is denoted by $\poly{n}$.
\begin{thm}[\cite{HP14b}]\label{thm:HP}
    A map $Z:\poly{n}\rightarrow\R$ is an upper semicontinuous and $\SL{n}$ invariant valuation
    if and only if there exist constants $c_0,c_1,c_2\in\R$ such that
    \[Z(P)=c_0+c_1\abs{P}+c_2\abs{P^\ast}\]
    for all $P\in\poly{n}$,
    where $\abs{P}$ is the volume of $P$ and
    \[P^\ast=\set{x\in\R^n:\inp{x,y}\leq 1\text{ for all }y\in P}\]
    is the polar of $P$.
\end{thm}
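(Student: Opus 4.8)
The plan is to settle the ``if'' direction by a direct check and to put the work into the ``only if'' direction, which I would handle by a dissection argument feeding into a functional equation.

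\textbf{Sufficiency.} I would verify that each of $P\mapsto1$, $P\mapsto\abs P$ and $P\mapsto\abs{P^\ast}$ is an upper semicontinuous (indeed continuous) $\SL n$ invariant valuation on $\poly n$; since the valuation condition is linear, any $\R$-combination of them is one as well. The constant and the volume are classical valuations, and both are $\SL n$ invariant because $\abs{AP}=\abs{\det A}\,\abs P=\abs P$. The only point needing care is that $P\mapsto\abs{P^\ast}$ is a valuation. Whenever $P,Q,P\cup Q\in\poly n$, the set $P\cup Q$ is convex, so $(P\cup Q)^\ast=P^\ast\cap Q^\ast$; one checks that $P^\ast\cup Q^\ast$ is then convex as well (otherwise a point of $\conv(P^\ast\cup Q^\ast)$ lying outside $P^\ast\cup Q^\ast$ would force the value of a fixed linear functional to exceed $1$ along a whole segment joining witnesses in $P\setminus Q$ and $Q\setminus P$, yet that segment lies in $P\cup Q$ and meets $P\cap Q$ by connectedness, a contradiction), hence $(P\cap Q)^\ast=\conv(P^\ast\cup Q^\ast)=P^\ast\cup Q^\ast$. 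Therefore
\[
  \abs{(P\cup Q)^\ast}+\abs{(P\cap Q)^\ast}=\abs{P^\ast\cap Q^\ast}+\abs{P^\ast\cup Q^\ast}=\abs{P^\ast}+\abs{Q^\ast}
\]
by the valuation property of ordinary volume. Invariance follows from $(AP)^\ast=A^{-\mathsf T}P^\ast$ together with $\abs{\det A^{-\mathsf T}}=1$ for $A\in\SL n$, and continuity from the continuity of polarity on bodies with the origin in the interior.

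\textbf{Necessity.} Given an upper semicontinuous $\SL n$ invariant valuation $Z\colon\poly n\to\R$, a convenient first reduction is to observe that $Z^\ast(P):=Z(P^\ast)$ is again such a valuation (by the same duality, using $(AP)^\ast=A^{-\mathsf T}P^\ast$), so writing $Z=\tfrac12(Z+Z^\ast)+\tfrac12(Z-Z^\ast)$ one may assume $Z$ is either fixed or negated by $\ast$ — corresponding in the target form to $c_1=c_2$, respectively to $c_0=0$ and $c_1=-c_2$. The core is then a dissection step that reduces the evaluation of $Z$ on an arbitrary $P\in\poly n$ to its values on a small, $\SL n$-controlled family of standard polytopes, say simplices having the origin as centroid. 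The obvious decomposition of $P$ into the cones $\conv(\{0\}\cup F)$ over its facets $F$ cannot be used directly, since these cones have the origin on their boundary and thus lie outside $\poly n$; the remedy is to replace the apex $\{0\}$ by a tiny simplex about the origin, run the resulting genuine set-theoretic inclusion–exclusion inside $\poly n$, and pass to the limit using upper semicontinuity of $Z$. Carried through, this should express $Z(P)$ in terms of $Z$ on standard simplices alone.

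On that family $\SL n$ invariance identifies all simplices of one affine type, leaving a single dilation parameter $s>0$; substituting the dissection relations into the valuation identity then produces a functional equation for $s\mapsto Z(sT)$. As $\abs{sT}=s^n\abs T$ and $\abs{(sT)^\ast}=s^{-n}\abs{T^\ast}$, one expects the regular solutions to be of the form $as^n+bs^{-n}+c$, and upper semicontinuity should exclude the irregular ones; reassembling gives $Z(P)=c_0+c_1\abs P+c_2\abs{P^\ast}$. An induction on $n$ can be organized by relating $Z$, via sections of $P$ by hyperplanes through the origin, to an upper semicontinuous $\SL{n-1}$ invariant valuation on $\poly{n-1}$, with a low-dimensional base case treated directly.

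\textbf{Main obstacle.} The hard part is making the dissection rigorous. Because $Z$ is defined only on full-dimensional polytopes containing the origin in the interior, every natural building block falls outside the domain, and one is forced into a thickening-and-limit procedure in which only $\limsup$-type control is available, not genuine continuity; one must arrange the thickenings so that the combinatorics of the dissection survives the limit and the resulting relations among values of $Z$ are not vacuous, and one must squeeze enough regularity out of mere upper semicontinuity to pin down the functional equation. Managing this — together with the bookkeeping of the many dissection pieces — is exactly the substantial content of \cite{HP14b}.
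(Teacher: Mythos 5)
First, a remark on the ground truth: the paper does not prove this statement. Theorem \ref{thm:HP} is the centro-affine Hadwiger theorem of Haberl and Parapatits, quoted from \cite{HP14b} and used as a black box, so there is no in-paper proof to measure your attempt against; I can only assess the attempt on its own terms.

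Your sufficiency argument is essentially correct and complete. The one nontrivial point is that $P\mapsto\abs{P^\ast}$ is a valuation, and the route you take --- $(P\cup Q)^\ast=P^\ast\cap Q^\ast$ together with the lemma that $P^\ast\cup Q^\ast$ is convex whenever $P,Q,P\cup Q\in\poly{n}$, so that $(P\cap Q)^\ast=\conv(P^\ast\cup Q^\ast)=P^\ast\cup Q^\ast$ --- is the standard one, and your convexity argument for the lemma is sound. The necessity direction, however, is a strategy outline rather than a proof, and the steps you defer are the theorem. Concretely: (i) upper semicontinuity only gives $\limsup_k Z(P_k)\leq Z(P)$ for $P_k\rightarrow P$, so the inclusion--exclusion identities satisfied by your thickened dissection pieces degenerate to one-sided inequalities in the limit; turning these into identities (or avoiding the limit altogether) is the central difficulty and you do not indicate how to resolve it. (ii) The reduction to simplices with the origin as centroid is not justified: a general $P\in\poly{n}$ admits no dissection into such simplices inside $\poly{n}$, and the functional equation for $s\mapsto Z(sT)$ is asserted, not derived. (iii) The claim that upper semicontinuity excludes irregular solutions is plausible (measurable solutions of Cauchy-type equations are regular) but presupposes a functional equation that has not been produced. (iv) The induction on dimension via hyperplane sections is not set up; it is not explained how sectioning yields an $\SL{n-1}$ invariant valuation on $\poly{n-1}$. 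Since you explicitly assign all of this to \cite{HP14b}, the proposal should be read as a correct proof of the easy direction plus a reasonable heuristic for the hard one --- which, for the purposes of the present paper, is exactly why the author cites the result rather than proving it.
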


Valuations are also considered on spaces of real-valued functions.
Here, we take the pointwise maximum and minimum as the join and meet, respectively.
Two important functions associated with
every convex body $K$ in $\R^n$ are the indicator function $\mathbbm{1}_K$
and the support function $h(K,\cdot)$, where $h(K,u)=\max\set{\inp{u,x}:x\in K}$
and $\inp{u,x}$ is the standard inner product of $u,x\in\R^n$.
As each of them is in one-to-one correspondence with $K$,
valuations on these function spaces are often considered
to be valuations on convex bodies.

Valuations on other classical function spaces have been characterized since 2010.
Tsang \cite{TSa10} characterized real-valued valuations on $L^p$-spaces.
\begin{thm}[\cite{TSa10}]\label{thm:Tsang}
    A functional $z:L^p(\R^n)\rightarrow\R$ is a conti\-nuous translation invariant valuation
    if and only if there exists a conti\-nuous function on $\R$ with the property that
    there exists $c\geq0$ such that $\abs{h(x)}\leq c\abs{x}^p$ for all $x\in\R$ and
    \[z(f)=\int_{\R^n}h\circ f\]
    for every $f\in L^p(\R^n)$.
\end{thm}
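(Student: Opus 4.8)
The plan is to prove both directions, with the substance in the \emph{only if} part. For the converse, assume $z(f)=\int_{\R^n}h\circ f$ with $h$ continuous and $\abs{h(x)}\le c\abs{x}^p$; then $h(0)=0$, so $\abs{h\circ f}\le c\abs{f}^p$ is integrable and $z$ is a well-defined real-valued functional, translation invariance being inherited from that of Lebesgue measure. The valuation equation is pointwise: for $a,b\in\R$ the multisets $\set{\max(a,b),\min(a,b)}$ and $\set{a,b}$ coincide, so $h(\max(a,b))+h(\min(a,b))=h(a)+h(b)$; taking $a=f(x)$, $b=g(x)$ and integrating gives $z(f\vee g)+z(f\wedge g)=z(f)+z(g)$. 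Continuity follows by a routine subsequence argument: if $f_k\to f$ in $L^p(\R^n)$, every subsequence has a further subsequence converging a.e.\ and dominated by a fixed $G\in L^p(\R^n)$, and since $\abs{h\circ f_k}\le cG^p\in L^1(\R^n)$, dominated convergence yields $z(f_k)\to z(f)$.

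For the direct implication, let $z$ be a continuous, translation invariant, real-valued valuation; as is standard for valuations, $z(0)=0$. The first step analyses $z$ on single-level functions. Fix $t\in\R$ and set $\phi_t(A)=z(t\mathbbm{1}_A)$ for measurable $A$ of finite measure. When $t>0$ one has $t\mathbbm{1}_{A\cup B}=(t\mathbbm{1}_A)\vee(t\mathbbm{1}_B)$ and $t\mathbbm{1}_{A\cap B}=(t\mathbbm{1}_A)\wedge(t\mathbbm{1}_B)$ (for $t<0$ the roles of $\vee$ and $\wedge$ are interchanged), so the valuation equation gives $\phi_t(A\cup B)+\phi_t(A\cap B)=\phi_t(A)+\phi_t(B)$; together with $\phi_t(\emptyset)=z(0)=0$ this makes $\phi_t$ finitely additive. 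It is moreover translation invariant, and continuous for the pseudometric $(A,B)\mapsto\abs{A\triangle B}$, because $\abs{A_k\triangle A}\to0$ forces $t\mathbbm{1}_{A_k}\to t\mathbbm{1}_A$ in $L^p(\R^n)$; in particular $\phi_t$ vanishes on null sets. Subdividing a cube into $k^n$ congruent translates and iterating shows $\phi_t(Q')=h(t)\abs{Q'}$ for every cube $Q'$ of rational side length, where $h(t):=z(t\mathbbm{1}_Q)$ for a fixed unit cube $Q$; continuity for $\abs{\cdot\,\triangle\,\cdot}$ extends this to all cubes, then to finite unions of cubes, and then, approximating an arbitrary finite-measure set in measure by such unions and invoking the continuity of $z$, to all finite-measure $A$, so that $z(t\mathbbm{1}_A)=h(t)\abs{A}$. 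Finally $h$ is continuous, because $t\mapsto t\mathbbm{1}_Q$ is continuous into $L^p(\R^n)$, and $h(0)=0$.

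The second step reaches all simple functions. For $f=\sum_{i=1}^m t_i\mathbbm{1}_{A_i}$ with pairwise disjoint $A_i$ and $t_1\ge\dots\ge t_m>0$, the identities $f=(t_m\mathbbm{1}_{A_1\cup\dots\cup A_m})\vee\bigl(\sum_{i<m}t_i\mathbbm{1}_{A_i}\bigr)$ and $(t_m\mathbbm{1}_{A_1\cup\dots\cup A_m})\wedge\bigl(\sum_{i<m}t_i\mathbbm{1}_{A_i}\bigr)=t_m\mathbbm{1}_{A_1\cup\dots\cup A_{m-1}}$ allow one to feed $f$ into the valuation equation and induct on $m$, using the first step for the single-level terms, to obtain $z(f)=\sum_{i=1}^m h(t_i)\abs{A_i}=\int_{\R^n}h\circ f$. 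Writing a general simple $f=f^++\min(f,0)$ as a sum of a nonnegative and a nonpositive simple function with disjoint supports (treated by the positive and the mirror-image negative case) and using $z(f)=z(f^+)+z(\min(f,0))$, which is the valuation equation for $f$ and $0$, extends the formula to all simple $f$. The third step is the growth bound. If $\sup_{t\ne0}\abs{h(t)}/\abs{t}^p=\infty$, there is a sequence $t_j\ne0$ with $\abs{h(t_j)}\ge j\abs{t_j}^p$; passing to a subsequence on which $h(t_j)$ has constant sign (the two cases being symmetric), choose pairwise disjoint cubes $Q_j$ with $\abs{Q_j}=j^{-2}\abs{t_j}^{-p}$ and put $g=\sum_j t_j\mathbbm{1}_{Q_j}$. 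Then $\norm{g}_p^p=\sum_j j^{-2}<\infty$, so $g\in L^p(\R^n)$ and its partial sums $g_N=\sum_{j\le N}t_j\mathbbm{1}_{Q_j}$ converge to $g$ in $L^p(\R^n)$; yet $z(g_N)=\sum_{j\le N}h(t_j)\abs{Q_j}$ grows like $\sum_{j\le N}j^{-1}$, contradicting that $z$ is real-valued and continuous. Hence $\abs{h(x)}\le c\abs{x}^p$ with $c:=\sup_{t\ne0}\abs{h(t)}/\abs{t}^p<\infty$. The fourth step is density: given $f\in L^p(\R^n)$, pick simple $f_k\to f$ in $L^p(\R^n)$ and pass to a subsequence converging a.e.\ and dominated by a fixed $G\in L^p(\R^n)$; then $z(f_k)\to z(f)$ by continuity of $z$, while $\int_{\R^n}h\circ f_k\to\int_{\R^n}h\circ f$ by dominated convergence (since $\abs{h\circ f_k}\le cG^p\in L^1(\R^n)$), whence $z(f)=\int_{\R^n}h\circ f$.

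The main obstacle I expect is the combination of the second and third steps. Turning the abstract valuation identity into the explicit formula on simple functions requires spotting the right join and meet configurations together with a clean induction, while the growth estimate $\abs{h(x)}\le c\abs{x}^p$ is precisely where the hypothesis that $z$ takes finite real values (rather than values in $[-\infty,\infty]$) is genuinely used; by contrast, the reduction of $\phi_t$ to a multiple of Lebesgue measure and the final density passage are comparatively routine.
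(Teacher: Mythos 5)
This statement (Theorem~\ref{thm:Tsang}) is quoted from Tsang~\cite{TSa10}; the paper itself gives no proof of it, so there is no internal argument to compare yours against. Judged on its own terms, your reconstruction follows the natural (and essentially Tsang's) route: reduce to indicator functions, identify $A\mapsto z(t\mathbbm{1}_A)$ as $h(t)\abs{A}$ via finite additivity, translation invariance and continuity in the symmetric-difference pseudometric, induct over the levels of a simple function using the $\vee/\wedge$ identities you wrote (which I checked and which are correct), obtain the growth bound by the divergent-series contradiction, and finish by density plus dominated convergence. All of these steps, including the often-overlooked domination by a fixed $G\in L^p(\R^n)$ along a sub-subsequence, are sound.

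The one thing you must not wave away is the assertion that $z(0)=0$ ``as is standard for valuations.'' It is not a consequence of the definition: the valuation identity yields no information when $f=g$, and the constant functional $z\equiv 1$ is a continuous, translation invariant valuation on $L^p(\R^n)$ that cannot be written as $\int_{\R^n}h\circ f$ with $\abs{h(x)}\le c\abs{x}^p$ (that bound forces $h(0)=0$ and hence $z(0)=0$). So the equivalence as quoted holds only under the normalization $z(0)=0$, which must be taken as a hypothesis (or one must pass to $z-z(0)$); note that the paper's own Theorem~\ref{thm:main} lists $z(0)=0$ explicitly. Your argument genuinely uses it in Step~1 (finite additivity of $\phi_t$ needs $\phi_t(\emptyset)=0$) and in the decomposition $z(f)=z(f\vee 0)+z(f\wedge 0)$. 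State it as an assumption rather than deriving it; with that repair the proof is complete.
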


Kone \cite{Kon14} generalized this characterization to Orlicz spaces.
As for valuations on Sobolev spaces, Ludwig characterized
the Fisher information matrix and the optimal Sobolev body.
Throughout this paper, the Sobolev space on $\R^n$ with indices $k$ and $p$
is denoted by $\sobo{k}{p}$ (see Section 2 for precise definitions)
and the additive group of \mbox{real} symmetric $n\times n$ matrices
is denoted by $\langle\mathbb{M}^n,+\rangle$.
An operator $z:\sobo{1}{2}\rightarrow\langle\mathbb{M}^n,+\rangle$
is called \emph{$\GL{n}$ contravariant} if for some $p\in\R$,
\[z\left(f\circ\phi^{-1}\right)=\abs{\det\phi}^p\phi^{-t}z(f)\phi^{-1}\]
for all $f\in\sobo{1}{2}$ and $\phi\in\GL{n}$,
where $\det\phi$ is the determinant of $\phi$
and $\phi^{-t}$ denotes the inverse of the transpose of $\phi$.
An operator $z:\sobo{1}{2}\rightarrow\langle\mathbb{M}^n,+\rangle$
is called \emph{affinely contravariant} if it is $\GL{n}$ contravariant,
translation invariant, and homogeneous (see Section 2 for precise definitions).
\begin{thm}[\cite{Lud11b}]\label{thm:fisher}
    An operator $z:\sobo{1}{2}\rightarrow\langle\mathbb{M}^n,+\rangle$, where $n\geq3$,
    is a continuous and affinely contravariant valuation if and only if
    there is a constant $c\in\R$ such that
    \[z(f)=c\int_{\R^n}\nabla f\otimes\nabla f\]
    for every $f\in\sobo{1}{2}$.
\end{thm}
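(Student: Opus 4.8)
The plan is to prove both implications: the forward one (sufficiency) by direct verification, and the reverse one (necessity) by reducing, via the valuation property, to a pointwise integral representation that the affine contravariance and homogeneity then rigidify. For sufficiency I would set $z_0(f)=\int_{\R^n}\nabla f\otimes\nabla f$ and check three things. Finiteness and continuity on $\sobo{1}{2}$ are immediate, since $f\mapsto\nabla f$ is bounded and linear into $L^2(\R^n;\R^n)$ and $u\mapsto\int_{\R^n}u\otimes u$ is continuous there. For the valuation identity I would argue pointwise: for a.e.\ $x$, if $f(x)>g(x)$ then $\nabla(f\vee g)(x)=\nabla f(x)$ and $\nabla(f\wedge g)(x)=\nabla g(x)$, if $f(x)<g(x)$ these are interchanged, and on $\set{f=g}$ one has $\nabla f=\nabla g$ a.e.; in every case $\nabla(f\vee g)\otimes\nabla(f\vee g)+\nabla(f\wedge g)\otimes\nabla(f\wedge g)=\nabla f\otimes\nabla f+\nabla g\otimes\nabla g$, and integrating gives \eqref{eqn:val}. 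Finally, $\nabla(f\circ\phi^{-1})=\phi^{-t}\bigl((\nabla f)\circ\phi^{-1}\bigr)$ and the change of variables $y=\phi^{-1}x$ give $z_0(f\circ\phi^{-1})=\absx{\det\phi}\,\phi^{-t}z_0(f)\phi^{-1}$, so $z_0$ is $\GL{n}$ contravariant with $p=1$, is manifestly translation invariant, and satisfies $z_0(rf)=r^2z_0(f)$; hence $cz_0$ is admissible for every $c\in\R$.

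For necessity, let $z$ be a continuous, affinely contravariant valuation. First I would normalize: applying the contravariance to $\phi\in\SL{n}$ shows that $z(0)$ is fixed by the conjugation action $M\mapsto\phi^{-t}M\phi^{-1}$ of $\SL{n}$ on $\mathbb{M}^n$, which has only the zero fixed point, so $z(0)=0$; consequently $z(f+g)=z(f)+z(g)$ whenever $f,g\ge0$ have essentially disjoint supports (then $f\vee g=f+g$ and $f\wedge g=0$). Using this additivity together with continuity, translation invariance, and the valuation identity on overlapping supports, I would adapt the proof of Theorem~\ref{thm:Tsang} from $L^p$ to $\sobo{1}{2}$ to produce a continuous $F\colon\R\times\R^n\to\mathbb{M}^n$, with suitable growth, such that $z(f)=\int_{\R^n}F(f(x),\nabla f(x))\,dx$ for every $f\in\sobo{1}{2}$. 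This is the hard part: indicator functions do not lie in $\sobo{1}{2}$, so Tsang's building blocks must be replaced by mollified tent functions, and one must show the local density depends on $f$ only through its approximate value and approximate gradient at a point; the restriction $n\ge 3$ enters at this step and the next. As an orienting check, restricting to rotations $\phi\in\mathrm{SO}(n)$ (where $\phi^{-t}=\phi$ and $\det\phi=1$) gives $z(f\circ\phi^{-1})=\phi z(f)\phi^{t}$, so on the sublattice of radial functions $z(f)=\lambda(f)I$ with $\lambda$ a continuous valuation of the same scaling as $\int_{\R^n}\absx{\nabla f}^2$.

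It then remains to rigidify $F$. The $\mathrm{SO}(n)$ equivariance forces $F(s,\psi v)=\psi F(s,v)\psi^{t}$ for $\psi\in\mathrm{SO}(n)$, hence $F(s,v)=a(s,\absx{v})\,I+b(s,\absx{v})\,v\otimes v/\absx{v}^{2}$. The general $\GL{n}$ contravariance forces $F(s,\psi v)=\absx{\det\psi}^{\,1-p}\psi F(s,v)\psi^{t}$ for all $\psi\in\GL{n}$: testing against a volume-preserving shear that fixes $v$ kills the $I$-term, so $a\equiv0$; testing against an element of $\SL{n}$ that stretches along $v$ removes the dependence on $\absx{v}$; and testing against $\psi=\rho I$ forces $p=1$. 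This leaves $F(s,v)=\beta(s)\,v\otimes v$, and the homogeneity $F(rs,rv)=r^{q}F(s,v)$ gives $\beta(rs)=r^{q-2}\beta(s)$, so $\beta(s)=c_{\pm}\absx{s}^{q-2}$. Since for $n\ge 3$ the functional $f\mapsto\int_{\R^n}\absx{f}^{q-2}\,\nabla f\otimes\nabla f$ is finite on all of $\sobo{1}{2}$ only when $q=2$ (subcriticality of the Sobolev embedding), we conclude that $\beta$ is constant and $z(f)=c\int_{\R^n}\nabla f\otimes\nabla f$. The main obstacle throughout is the integral representation in the second paragraph; the remainder is symmetry bookkeeping.
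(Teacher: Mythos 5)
First, note that this theorem is not proved in the present paper at all: it is quoted from Ludwig's \emph{Fisher information and matrix-valued valuations} \cite{Lud11b}, so your proposal has to be judged against that proof. Your sufficiency direction is fine and matches the standard verification (pointwise identity for $\nabla(f\vee g)$, $\nabla(f\wedge g)$, change of variables for contravariance). Your observation that $z(0)=0$ because $0$ is the only fixed point of the $\SL{n}$-action $M\mapsto\phi^{-t}M\phi^{-1}$ on symmetric matrices is also correct for $n\geq2$.

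The necessity direction, however, has a genuine gap exactly where you flag "the hard part": the claimed representation $z(f)=\int_{\R^n}F(f(x),\nabla f(x))\,dx$ cannot be obtained by "adapting" Tsang's Theorem~\ref{thm:Tsang}. Tsang's argument produces a density depending only on the \emph{value} of $f$, because the lattice operations $\vee,\wedge$ are determined by pointwise comparison of values; there is no mechanism in that proof that lets the density see $\nabla f$, and a general integral-representation theorem with integrand $F(f,\nabla f)$ for continuous translation-invariant valuations on $\sobo{1}{2}$ is not available --- it is essentially the content of the theorem you are trying to prove, not a tool you may assume. Ludwig's actual proof takes a different route: reduce by continuity and inclusion--exclusion to piecewise affine functions and then to the cone functions $s\ell_P$, $P\in\poly{n}$ (exactly the mechanism of Lemma~\ref{thm:L1p} in this paper), and then invoke her classification of $\GL{n}$-contravariant matrix-valued valuations on convex polytopes \cite{Lud03}, which identifies the moment-type matrix $\int\nabla\ell_P\otimes\nabla\ell_P$. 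That polytope classification is also where the hypothesis $n\geq3$ is genuinely used (in dimension $2$ there are additional solutions coming from rotation by $\pi/2$); tellingly, nothing in your symmetry bookkeeping with $\mathrm{SO}(n)$, shears, and dilations ever uses $n\geq3$, which signals that the proposed route bypasses the real difficulty rather than resolving it. The final rigidification of $F$ and the homogeneity argument are fine as formal computations, but they rest on the unproved representation, so the proof as outlined does not go through.
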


Other recent and interesting characterizations can be found in
\cite{CC15,Tsa12,BGW13,Lud13,Wang14,Obe14}.

In this paper, we classify real-valued valuations on $\sobo{1}{p}$.
The result regarding homogeneous valuations is stated first.
Let $1\leq p<n$ throughout this paper.
Furthermore, we say that a valuation is \emph{trivial} if it is identically zero.
\begin{thm}\label{thm:homo}
    A functional $z:\sobo{1}{p}\rightarrow\R$ is a non-trivial conti\-nuous,
    $\SL{n}$ and translation invariant valuation
    that is homogeneous of degree $q$ if and only if $p\leq q\leq \frac{np}{n-p}$
    and there exists a constant $c\in\R$ such that
    \begin{equation}\label{eqn:homo}
        z(f)=c\normk{q}{f}^q
    \end{equation}
    for every $f\in\sobo{1}{p}$.
\end{thm}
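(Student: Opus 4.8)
\medskip
\noindent\textit{Proof plan.}\quad
The ``if'' direction is a direct verification. When $1\le p<n$ and $p\le q\le\frac{np}{n-p}$, the Sobolev embedding $\sobo{1}{p}\hookrightarrow L^q(\R^n)$ is continuous, so $z(f)=c\int_{\R^n}\abs{f}^q$ is finite-valued and continuous on $\sobo{1}{p}$; it is homogeneous of degree $q$ since $\int_{\R^n}\abs{\lambda f}^q=\abs{\lambda}^q\int_{\R^n}\abs{f}^q$, and it is translation and $\SL{n}$ invariant since Lebesgue measure is invariant under translations and unimodular linear maps. Finally $\sobo{1}{p}$ is a lattice under the pointwise operations, and at every point $\setx{(f\vee g)(x),(f\wedge g)(x)}=\setx{f(x),g(x)}$, so $\abs{f\vee g}^q+\abs{f\wedge g}^q=\abs{f}^q+\abs{g}^q$ pointwise; integrating gives \eqref{eqn:val}.

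\medskip
For the converse, let $z$ be a non-trivial continuous, $\SL{n}$ and translation invariant valuation on $\sobo{1}{p}$ which is homogeneous of degree $q$. Homogeneity gives $z(0)=0$, and, since $\lambda f\to0$ in $\sobo{1}{p}$ as $\lambda\to0$, continuity of $z$ forces $q>0$. The heart of the argument is an \emph{integral representation} $z(f)=\int_{\R^n}\zeta(f(x),\nabla f(x))\,dx$ for a continuous function $\zeta\colon\R\times\R^n\to\R$. Following the pattern of Theorems~\ref{thm:Tsang} and~\ref{thm:fisher}: \eqref{eqn:val} together with $z(0)=0$ gives additivity of $z$ over functions with disjoint supports (for $f,g\ge0$ supported on disjoint open sets, $f\vee g=f+g$ and $f\wedge g=0$, so $z(f+g)=z(f)+z(g)$); then, via a standard extension argument using monotone limits and the continuity of $z$, one shows that for fixed $f$ the part of $z(f)$ carried by a Borel set is a signed measure, absolutely continuous with respect to Lebesgue measure, whose density depends only on $x$, $f(x)$ and $\nabla f(x)$; translation invariance removes the dependence on $x$, and continuity of $z$ yields continuity of $\zeta$ and the growth bound needed on the dense subclass of compactly supported piecewise-affine (equivalently, Lipschitz with compact support) functions. \textbf{Establishing this representation is the main obstacle}: unlike functions in $L^p(\R^n)$, a Sobolev function cannot be restricted to an arbitrary measurable set, so the localization must be carried out along polytopal subdivisions, using that compactly supported piecewise-affine functions are dense in $\sobo{1}{p}$ and can be cut and reglued along hyperplanes while remaining in $\sobo{1}{p}$. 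A parallel, more geometric route proceeds via the cone functions $f_{P,t}(x)=t\,(1-\normx{x}_P)_+$ with $P\in\poly{n}$ and $t>0$: because $\normx{\,\cdot\,}_{P\cup Q}=\min\setx{\normx{\,\cdot\,}_P,\normx{\,\cdot\,}_Q}$ and $\normx{\,\cdot\,}_{P\cap Q}=\max\setx{\normx{\,\cdot\,}_P,\normx{\,\cdot\,}_Q}$ whenever $P\cup Q$ is convex, $P\mapsto z(f_{P,t})$ is a continuous $\SL{n}$ invariant valuation on $\poly{n}$, so Theorem~\ref{thm:HP} gives $z(f_{P,t})=c_0(t)+c_1(t)\abs{P}+c_2(t)\abs{P^\ast}$; homogeneity forces $c_i(t)=c_it^q$, and letting $P$ shrink to $\setx{0}$, where $f_{P,t}\to0$ in $\sobo{1}{p}$ precisely because $p<n$, kills $c_0$ and $c_2$ and leaves $z(f_{P,t})=c\,\normk{q}{f_{P,t}}^q$; passing from here to arbitrary elements of $\sobo{1}{p}$ is again the delicate point.

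\medskip
Granting the representation, the rest is routine. Applying $\SL{n}$ invariance and changing variables, $\int_{\R^n}[\zeta(f,\phi^{-t}\nabla f)-\zeta(f,\nabla f)]=0$ for all $f$ and all $\phi\in\SL{n}$; a du Bois-Reymond argument with test functions affine on small balls upgrades this to $\zeta(t,\phi^{-t}v)=\zeta(t,v)$ pointwise, and since $n\ge2$ (because $1\le p<n$) the group $\SL{n}$ acts transitively on $\R^n\setminus\setx{0}$, so continuity of $\zeta$ makes it independent of its second argument: $\zeta(t,v)=\zeta(t)$. Likewise, homogeneity of $z$ gives $\zeta(\lambda t)=\abs{\lambda}^q\zeta(t)$ for all real $\lambda$, hence $\zeta(-t)=\zeta(t)$ and $\zeta(t)=c\abs{t}^q$ with $c=\zeta(1)$; thus $z(f)=c\int_{\R^n}\abs{f}^q=c\,\normk{q}{f}^q$ on the dense subclass. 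Finally $z$ is real-valued on all of $\sobo{1}{p}$, and $\sobo{1}{p}\subseteq L^q(\R^n)$ holds if and only if $p\le q\le\frac{np}{n-p}$ --- if $q$ lies outside this range one exhibits $f\in\sobo{1}{p}$ with $\int_{\R^n}\abs{f}^q=\infty$ (a power-type singularity when $q>\frac{np}{n-p}$, a heavy tail $(1+\abs{x})^{-\beta}$ with $n/p<\beta\le n/q$ when $q<p$) and, choosing $f_k\to f$ in $\sobo{1}{p}$ from the dense subclass, $z(f_k)=c\,\normk{q}{f_k}^q\to\infty$, contradicting continuity of $z$ unless $c=0$. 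Since $z$ is non-trivial, $c\ne0$, so $p\le q\le\frac{np}{n-p}$; in that range $\normk{q}{\,\cdot\,}^q$ is continuous on $\sobo{1}{p}$ and agrees with $z$ on a dense set, whence $z=c\,\normk{q}{\,\cdot\,}^q$ on all of $\sobo{1}{p}$.
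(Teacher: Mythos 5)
Your ``if'' direction and your endgame are fine, and your ``more geometric route'' is in fact the paper's route: the map $P\mapsto z(s\ell_P)$ is a valuation on $\poly{n}$, the centro-affine Hadwiger theorem (Theorem \ref{thm:HP}) gives $z(s\ell_P)=c_0(s)+c_1(s)\abs{P}+c_2(s)\abs{P^\ast}$, sequences tending to $0$ in $\sobo{1}{p}$ kill $c_0$ and $c_2$ and bound the growth of $c_1$, and homogeneity then forces $c_1(s)=c\abs{s}^q$ with $p\le q\le p^\ast$ unless $c=0$. But you have explicitly left unproven the step you yourself flag as ``the delicate point'': passing from the cone functions $s\ell_P$ to general elements of $\sobo{1}{p}$. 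This is a genuine gap, not a routine density argument. Knowing a continuous valuation on the multiples and translates of $\pap{n}$ does not determine it on the dense subspace $\lpa$ of piecewise affine functions, because a general piecewise affine function is not a limit of such cone functions; one must use the valuation property itself to propagate the formula. The paper does this in Lemma \ref{thm:L1p}: after reducing to non-negative functions, to functions with distinct vertex values, to functions concave on their support (via inclusion--exclusion over a join decomposition $f=f_1\vee\cdots\vee f_m$), and to non-singular graphs, it runs an induction on the number of facet hyperplanes of the polytope $F$ bounded by the graph of $f$, peeling off a pyramid $\ell$ with $f\vee\ell=\bar f$ and $f\wedge\ell=\bar\ell$ where $\ell$, $\bar\ell$ are scaled translates of elements of $\pap{n}$ and $\bar F$ has fewer such facets. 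Without this (or an equivalent) argument your proof is incomplete at its central step.

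Your first proposed route --- a pointwise integral representation $z(f)=\int_{\R^n}\zeta(f,\nabla f)$ obtained by localizing $z$ to Borel sets --- is not viable as sketched and is not what the paper does: a valuation on $\sobo{1}{p}$ does not a priori admit such a localization (restrictions of Sobolev functions to measurable sets leave the space, and additivity over disjoint supports does not yield a measure-valued extension without substantial extra work). The whole point of Ludwig's method, which the paper follows, is to bypass any such representation by working entirely within the lattice of piecewise affine functions. One further small remark: in the homogeneous setting your shrinking-polytope argument does kill $c_0$ and $c_2$ (since $\abs{P^\ast}\to\infty$ while $z(f_{P,t})\to z(0)=0$), so that part is sound; the missing piece is solely the reduction lemma.
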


It is natural to consider the same characterization without the assump\-tion of homogeneity.
It turns out to be more complicated and costs addi\-tional assumptions.
We first fix the following notation.
Let $C^k(\R^n)$ denote the space of functions on $\R^n$
that have $k$ times continuous partial derivatives
for a positive integer $k$;
let $\lbv$ denote the space of functions on $\R$
that are of locally bounded variation.
We denote by $\G$ the class of functions $g$
that belong to $\lbv$ and satisfy
\begin{equation}\label{eqn:gc}
    g(x)\sim\left\{\begin{array}{ll}
                    O(x^p), & \text{ as }x\rightarrow0; \\
                    O(x^{\frac{np}{n-p}}), & \text{ as }x\rightarrow\infty.
                  \end{array}
    \right.
\end{equation}
and by $\B$ the class of functions $g$
that belong to $C^n(\R)$ with $g^{(n)}\in\lbv$
and $x^kg^{(k)}(x)$ satisfying \eqref{eqn:gc} for each integer $1\leq k\leq n$.
Let $\pap{n}$ be the set of functions $\ell_P$ with $P\in\poly{n}$
that enclose pyramids of height 1 on $P$ (see Section 2 for the precise definition).
\begin{thm}\label{thm:main}
    A functional $z:\sobo{1}{p}\rightarrow\R$ is a continuous, $\SL{n}$
    and translation invariant valuation with $z(0)=0$ and
    $s\mapsto z(sf)$ in $\B$ for $s\in\R$ and $f\in\pap{n}$ if and only if
    there exists a continuous function $h\in\G$ such that
    \[z(f)=\int_{\R^n}h\circ f,\]
    for every $f\in\sobo{1}{p}$.
\end{thm}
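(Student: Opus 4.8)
The plan is to prove the two implications separately, the converse being the substantial one.

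\emph{Sufficiency.} Suppose $h\in\G$ is continuous. The asymptotics \eqref{eqn:gc} yield a constant $C$ with $\abs{h(x)}\le C(\abs{x}^p+\abs{x}^{np/(n-p)})$ for all $x\in\R$, so the Sobolev inclusions $\sobo{1}{p}\subset L^p(\R^n)$ and $\sobo{1}{p}\subset L^{np/(n-p)}(\R^n)$ (recall $1\le p<n$) show that $h\circ f\in L^1(\R^n)$ for every $f\in\sobo{1}{p}$; thus $z(f)=\int_{\R^n}h\circ f$ is well defined, and $z(0)=0$ since $h(0)=0$. Because at each point the unordered pair $\{f,g\}$ coincides with $\{f\vee g,f\wedge g\}$, one has $h\circ(f\vee g)+h\circ(f\wedge g)=h\circ f+h\circ g$ a.e., and integrating gives \eqref{eqn:val}; translation invariance is immediate, and $\SL{n}$ invariance follows from the change-of-variables formula since $\abs{\det\phi}=1$. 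Continuity of $z$ on $\sobo{1}{p}$ follows from the above growth bound by passing to an a.e.\ convergent subsequence dominated in $L^p(\R^n)\cap L^{np/(n-p)}(\R^n)$ and applying dominated convergence. Finally, for $f=\ell_P\in\pap{n}$ the superlevel sets satisfy $\set{\ell_P>t}=(1-t)P$ for $t\in[0,1]$, so
\[
z(s\ell_P)=n\abs{P}\int_0^1 h(st)(1-t)^{n-1}\,dt=\frac{n\abs{P}}{s^n}\int_0^s h(u)(s-u)^{n-1}\,du ,
\]
which is $s^{-n}$ times (a multiple of) the $n$th Riemann--Liouville integral of $h$; differentiating this expression $n$ times and using the scale-invariant decay packaged in \eqref{eqn:gc} puts $s\mapsto z(s\ell_P)$ in $\B$.

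\emph{Necessity: constructing $h$.} Fix a simplex $T$ with $0$ in its interior and set $\psi(s)=z(s\ell_T)$; by hypothesis $\psi\in\B$, which carries exactly the regularity (an $n$th derivative in $\lbv$ away from the origin) needed to apply Taylor's formula with integral remainder of order $n$, while $\psi(0)=z(0)=0$ together with the \eqref{eqn:gc}-asymptotics makes $s^n\psi(s)$ vanish to high order at $0$, so its Taylor polynomial of degree $n-1$ is zero. Guided by the identity above, I would \emph{define}
\[
h(s)=\frac{1}{n!\,\abs{T}}\,\frac{d^n}{ds^n}\bigl(s^n\psi(s)\bigr),
\]
so that Taylor's formula reads back $z(s\ell_T)=\frac{n\abs{T}}{s^n}\int_0^s h(u)(s-u)^{n-1}\,du=\int_{\R^n}h(s\ell_T)$ for all $s$. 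Expanding the $n$th derivative writes $h$ as a linear combination of the functions $s^k\psi^{(k)}(s)$, $0\le k\le n$, whence $h$ inherits \eqref{eqn:gc} and lies in $\lbv$, i.e.\ $h\in\G$; its continuity is then forced by the continuity of $z$ (equivalently, it is read off at the very end from the identity $z=\int h\circ(\cdot)$ on piecewise-linear functions). By translation and $\SL{n}$ invariance the identity $z(f)=\int_{\R^n}h\circ f$ now holds for $f=s\ell_P$ for every simplex $P$ with $\abs{P}=\abs{T}$ and every translate of such a $P$.

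\emph{Necessity: propagation and conclusion.} The heart of the proof --- and the step I expect to be hardest --- is to extend $z(f)=\int_{\R^n}h\circ f$ from this family of pyramids to a dense subclass of $\sobo{1}{p}$. I would argue by induction over piecewise-linear functions carried by finite polytopal complexes: given such an $f$, apply the valuation identity \eqref{eqn:val} repeatedly to pairs of piecewise-linear functions obtained from $f$ by capping its values and by deleting cells --- whose joins and meets remain piecewise-linear on refinements of the complex --- so as to peel off, cell by cell, contributions that $\SL{n}$ and translation invariance identify with already-computed pyramid values, hence with $\int_{\text{cell}}h\circ f$; the induction is on the number of top-dimensional cells, and its base case also disposes of the volume restriction left over from the previous step. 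This gives $z(f)=\int_{\R^n}h\circ f$ for every piecewise-linear $f$. Since these are dense in $\sobo{1}{p}$ and $z$ is continuous, a standard subsequence-plus-dominated-convergence argument (using $\abs{h(x)}\le C(\abs{x}^p+\abs{x}^{np/(n-p)})$ and $\sobo{1}{p}\subset L^p(\R^n)\cap L^{np/(n-p)}(\R^n)$ to dominate $h\circ f_k$) lets the right-hand side pass to the limit, yielding $z(f)=\int_{\R^n}h\circ f$ for all $f\in\sobo{1}{p}$. The delicate point is the cell-by-cell induction: arranging the cappings and deletions so that all intermediate functions stay piecewise-linear and so that the inductive hypothesis genuinely applies is where the real work lies, the remaining pieces being either routine verification or standard approximation.
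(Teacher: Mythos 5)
Your sufficiency argument and your formula for $h$ are essentially the paper's: expanding $\frac{1}{n!}\frac{d^n}{ds^n}\left(s^n\psi(s)\right)$ by the Leibniz rule gives $\sum_{j=0}^n\frac{1}{j!}{n\choose j}s^j\psi^{(j)}(s)$, which is exactly the function constructed in the paper, and your Taylor/Riemann--Liouville inversion is the same computation the paper performs by an inductive Fubini argument. The genuine gap is in the necessity direction, at the step you dispose of with ``by translation and $\SL{n}$ invariance the identity now holds for every simplex $P$ with $\abs{P}=\abs{T}$.'' That is false as stated: $\SL{n}$ invariance only gives $z(s\ell_{\phi T})=z(s\ell_T)$ for $\phi\in\SL{n}$, and two simplices in $\poly{n}$ of equal volume lie in the same $\SL{n}$ orbit only if the origin has the same barycentric coordinates with respect to their vertices; the orbit of a single $T$ is a thin (codimension-$n$) slice of the equal-volume simplices, let alone of all of $\pap{n}$. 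Consequently your cell-by-cell induction has essentially no base: the ``already-computed pyramid values'' you need to peel off are $z(s\ell_P)$ for arbitrary $P\in\poly{n}$ (this is precisely the base case of the paper's reduction, Lemma~\ref{thm:L1p}), and you have computed them only on one orbit; nor do you give any mechanism for removing the volume restriction.

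What is missing is the content of Lemma~\ref{thm:P1p}. One first observes that $\ell_P\vee\ell_Q=\ell_{P\cup Q}$ and $\ell_P\wedge\ell_Q=\ell_{P\cap Q}$ whenever $P\cup Q\in\poly{n}$, so that for each fixed $s$ the map $P\mapsto z(s\ell_P)$ is an $\SL{n}$ invariant valuation on $\poly{n}$; the centro-affine Hadwiger theorem of Haberl and Parapatits (Theorem~\ref{thm:HP}) then yields $z(s\ell_P)=c_0(s)+c_1(s)\abs{P}+c_2(s)\abs{P^\ast}$, and continuity arguments with suitably chosen sequences tending to $0$ in $\sobo{1}{p}$ kill $c_0$ and $c_2$ and force the growth condition \eqref{eqn:gc} on $c_1$. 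Note also that $\B$ controls $s^k\psi^{(k)}(s)$ only for $k\geq1$, so the $k=0$ decay you invoke to annihilate the Taylor polynomial of $s^n\psi(s)$ at the origin must itself come from this lemma. This classification step is the essential external input of the theorem and cannot be replaced by invariance considerations on a single simplex; once it is in place, your construction of $h$ and a propagation argument of the kind you sketch (carried out carefully, with the reductions to nonnegative, concave-on-support, nonsingular functions, in Lemma~\ref{thm:L1p}) do complete the proof.
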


The proofs of Theorem \ref{thm:homo} and \ref{thm:main} can be found
in Sections \ref{sec:homo} and \ref{sec:final}, respectively.


\section{Preliminaries}\label{sec:P}

For $p\geq1$ and a measurable function $f:\R^n\rightarrow\R$, let
\[\normk{p}{f}=\left(\int_{\R^n}\abs{f(x)}^pdx\right)^{1/p}.\]
Define $L^p(\R^n)$ to be the class of measurable functions with $\normk{p}{f}<\infty$
and $L_{loc}^p(\R^n)$ to be the class of measurable functions
with $\normk{p}{f\mathbbm{1}_K}<\infty$ for every compact $K\subset\R^n$.

A measurable function $\nabla f:\R^n\rightarrow\R^n$ is said
to be the \emph{weak gradient} of $f\in L^p(\R^n)$ if
\begin{equation}\label{eqn:delta}
    \int_{\R^n}\nu(x)\cdot\nabla f(x)dx=-\int_{\R^n}f(x)\nabla\cdot\nu(x)dx
\end{equation}
for every compactly supported smooth vector field $\nu:\R^n\rightarrow\R^n$,
where $\nabla\cdot\nu=\frac{\partial\nu_1}{\partial x_1}+\cdots+\frac{\partial\nu_n}{\partial x_n}$.
A function $f\in L^1(\R^n)$ is said to be of \emph{bounded variation} on $\R^n$ if
there exists a finite signed vector-valued Radon measure $\l$ on $\R^n$
such that
\[\int_{\R^n}\nu(x)\cdot\nabla f(x)dx=\int_{\R^n}\nu(x)\cdot d\l(x)\]
for every $\nu$ as mentioned before.
A function $f\in L_{loc}^1(\R^n)$ is said to be of \emph{locally bounded variation} on $\R^n$
if $f$ is of bounded variation on all open subset of $\R^n$.

The \emph{Sobolev space} $\sobo{1}{p}$ consists of all functions $f\in L^p(\R^n)$
whose weak gradient belongs to $L^p(\R^n)$ as well.
For each $f\in\sobo{1}{p}$, we define the \emph{Sobolev norm} to be
\[\normk{\sobo{1}{p}}{f}=\left(\normk{p}{f}^p+\normk{p}{\nabla f}^p\right)^{1/p},\]
where $\normk{p}{\nabla f}$ denotes the $L^p$ norm of $\abs{\nabla f}$.
Equipped with the Sobolev norm, the Sobolev space $\sobo{1}{p}$ is a Banach space.
\begin{thm}[\cite{LL01}]\label{thm:cvg}
    Let $\setx{f_i}$ be a sequence in $\sobo{1}{p}$ that
    converges to $f\in\sobo{1}{p}$. Then, there exists
    a subsequence $\setx{f_{i_j}}$ that converges to
    $f$ a.e. as $j\rightarrow\infty$.
\end{thm}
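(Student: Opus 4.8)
The plan is to reduce the statement to the classical fact that convergence in $L^p(\R^n)$ implies convergence almost everywhere along a subsequence. First I would observe that for every $g\in\sobo{1}{p}$ one has
\[\normk{p}{g}^p\leq\normk{p}{g}^p+\normk{p}{\nabla g}^p=\normk{\sobo{1}{p}}{g}^p,\]
so that $\normk{p}{g}\leq\normk{\sobo{1}{p}}{g}$. Hence, applying this to $g=f_i-f$, convergence of $\setx{f_i}$ to $f$ in the Sobolev norm forces $\normk{p}{f_i-f}\to 0$; that is, $f_i\to f$ in $L^p(\R^n)$.

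Next I would extract a rapidly converging subsequence. Since $\normk{p}{f_i-f}\to 0$, I can choose indices $i_1<i_2<\cdots$ with $\normk{p}{f_{i_j}-f}\leq 2^{-j}$ for every $j$. Set $g_N=\sum_{j=1}^N\abs{f_{i_j}-f}$ and let $g=\sum_{j=1}^\infty\abs{f_{i_j}-f}$, the latter being a well-defined measurable function with values in $[0,\infty]$, obtained as the monotone limit of the $g_N$. By the triangle inequality in $L^p(\R^n)$,
\[\normk{p}{g_N}\leq\sum_{j=1}^N\normk{p}{f_{i_j}-f}\leq\sum_{j=1}^\infty 2^{-j}=1,\]
and the monotone convergence theorem applied to $g_N^p\uparrow g^p$ yields $\normk{p}{g}\leq 1$. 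In particular $g\in L^p(\R^n)$, so $g(x)<\infty$ for almost every $x\in\R^n$.

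Finally, for each such $x$ the series $\sum_{j=1}^\infty\abs{f_{i_j}(x)-f(x)}$ converges, so its general term tends to $0$; equivalently $f_{i_j}(x)\to f(x)$ as $j\to\infty$ for almost every $x\in\R^n$, which is exactly the assertion for the subsequence $\setx{f_{i_j}}$. I do not expect a genuine obstacle here: the only point requiring care is the passage from norm control to pointwise control, and this is supplied precisely by the summable-subsequence device together with monotone convergence. The argument is also insensitive to the fact that elements of $\sobo{1}{p}$ are equivalence classes modulo null sets, since the conclusion is itself phrased almost everywhere.
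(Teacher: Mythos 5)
Your argument is correct: the reduction $\normkx{p}{f_i-f}\leq\normkx{\sobo{1}{p}}{f_i-f}$ followed by the summable-subsequence device with monotone convergence is exactly the classical proof, which the paper itself does not spell out but delegates to the cited reference \cite{LL01}. So there is nothing to fix; your proposal supplies in full the standard argument behind the citation.
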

Furthermore, for $1\leq p<n$, $\sobo{1}{p}$ is continuously embedded in $L^q(\R^n)$
for all $p\leq q\leq p^\ast$, where $p^\ast=\frac{np}{n-p}$ is the \emph{Sobolev conjugate} of $p$,
due to the Sobolev-Gagliardo-Nirenberg inequality stated as the following theorem.
\begin{thm}[\cite{Leo09}]\label{thm:SGN}
    Let $1\leq p<n$. There exists a positive constant $C$,
    depending only on $p$ and $n$, such that
    \[\normk{p^\ast}{f}\leq C\normk{p}{\nabla f}\]
    for all $f\in\sobo{1}{p}$.
\end{thm}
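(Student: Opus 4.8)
\textbf{Plan for the proof of Theorem~\ref{thm:SGN} (Sobolev--Gagliardo--Nirenberg inequality).}

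The plan is to establish the inequality first for smooth compactly supported functions $f\in C_c^\infty(\R^n)$ and then to obtain the general case by a density and Fatou argument. The cleanest route is the Gagliardo--Nirenberg argument in the endpoint case $p=1$, followed by an interpolation trick to reach general $1\le p<n$. So the first step is: \emph{prove the case $p=1$, i.e. $\normk{1^\ast}{f}=\normk{n/(n-1)}{f}\le C\normk{1}{\nabla f}$ for $f\in C_c^\infty(\R^n)$.} The idea here is the classical slicing estimate: for each coordinate direction $i$ and each $x$, write $\abs{f(x)}\le\int_{-\infty}^{\infty}\abs{\partial_i f(x_1,\dots,t,\dots,x_n)}\,dt=:g_i(\hat x_i)$, where $\hat x_i$ denotes the variables other than $x_i$. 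Raising to the power $1/(n-1)$ and multiplying over $i=1,\dots,n$ gives $\abs{f(x)}^{n/(n-1)}\le\prod_{i=1}^n g_i(\hat x_i)^{1/(n-1)}$. Now one integrates successively in $x_1,x_2,\dots,x_n$, at each stage applying the generalized H\"older inequality with exponents $(n-1,\dots,n-1)$ to the $n-1$ factors that still depend on the current variable. After all $n$ integrations one obtains $\int_{\R^n}\abs{f}^{n/(n-1)}\le\prod_{i=1}^n\big(\int_{\R^n}\abs{\partial_i f}\big)^{1/(n-1)}$, and since each $\int\abs{\partial_i f}\le\int\abs{\nabla f}=\normk{1}{\nabla f}$, the right side is at most $\normk{1}{\nabla f}^{n/(n-1)}$. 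Taking the $(n-1)/n$ power yields the $p=1$ case with $C=1$.

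The second step is to \emph{bootstrap from $p=1$ to general $1\le p<n$}. For $f\in C_c^\infty(\R^n)$ and an exponent $\gamma>1$ to be chosen, apply the $p=1$ inequality to $\abs{f}^\gamma$ (which is Lipschitz with compact support, so the estimate applies to it by a routine approximation): $\big(\int\abs{f}^{\gamma n/(n-1)}\big)^{(n-1)/n}\le\gamma\int\abs{f}^{\gamma-1}\abs{\nabla f}$. To the right-hand side apply H\"older with exponents $p'=p/(p-1)$ and $p$, obtaining $\gamma\big(\int\abs{f}^{(\gamma-1)p'}\big)^{1/p'}\normk{p}{\nabla f}$. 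Now choose $\gamma$ so that the exponent on the left matches the one on the right: $\gamma n/(n-1)=(\gamma-1)p/(p-1)$, which solves to $\gamma=p(n-1)/(n-p)>1$ (using $p<n$), and then one checks $\gamma n/(n-1)=p^\ast=np/(n-p)$. Dividing through by the common power of $\normk{p^\ast}{f}$ (finite and nonzero unless $f\equiv0$, in which case the inequality is trivial) gives $\normk{p^\ast}{f}\le C\normk{p}{\nabla f}$ with $C=\gamma=p(n-1)/(n-p)$ depending only on $p$ and $n$.

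The third step is to \emph{remove the smoothness hypothesis}, passing from $C_c^\infty(\R^n)$ to all $f\in\sobo{1}{p}$. Given $f\in\sobo{1}{p}$, take a sequence $f_i\in C_c^\infty(\R^n)$ with $f_i\to f$ in $\sobo{1}{p}$ (density of smooth compactly supported functions in the Sobolev space, a standard fact one may cite from \cite{Leo09}); then $\normk{p}{\nabla f_i}\to\normk{p}{\nabla f}$, and by Theorem~\ref{thm:cvg} a subsequence converges to $f$ a.e. Applying the inequality to each $f_i$ and invoking Fatou's lemma along that subsequence gives $\normk{p^\ast}{f}\le\liminf_i\normk{p^\ast}{f_i}\le C\liminf_i\normk{p}{\nabla f_i}=C\normk{p}{\nabla f}$, which is the claimed inequality. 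The main obstacle — really the only delicate point — is organizing the iterated H\"older estimate in Step~1 correctly: one must track at each of the $n$ integration stages exactly which factors still depend on the variable being integrated, and verify that the generalized H\"older inequality with $n-1$ equal exponents applies to precisely those $n-1$ factors. Everything else (the algebraic choice of $\gamma$ in Step~2, the density/Fatou argument in Step~3) is routine.
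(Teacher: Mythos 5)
Your argument is correct: the paper gives no proof of this theorem (it is quoted from the cited reference \cite{Leo09}), and your three-step plan --- the Gagliardo--Nirenberg slicing/iterated-H\"older argument for $p=1$, the bootstrap via $\abs{f}^{\gamma}$ with $\gamma=p(n-1)/(n-p)$, and the density-plus-Fatou passage from $C_c^{\infty}(\R^n)$ to $\sobo{1}{p}$ --- is exactly the standard proof found there. The only cosmetic remark is that $\gamma>1$ requires $p>1$ (for $p=1$ one has $\gamma=1$), but that case is already settled by your first step, so nothing is lost.
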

\begin{rem}
    By Theorem \ref{thm:SGN}, the expression in (\ref{eqn:homo}) is well defined.
\end{rem}

For $f,g\in\sobo{1}{p}$, we have $f\vee g$, $f\wedge g\in\sobo{1}{p}$ and for almost every $x\in\R^n$,
\begin{equation*}
    \nabla(f\vee g)(x)=\left\{
                         \begin{array}{ll}
                           \nabla f(x), & \text{when }f(x)>g(x) \\
                           \nabla g(x), & \text{when }f(x)<g(x) \\
                           \nabla f(x)=\nabla g(x), & \text{when }f(x)=g(x)
                         \end{array}
                       \right.
\end{equation*}
and
\begin{equation*}
    \nabla(f\wedge g)(x)=\left\{
                         \begin{array}{ll}
                           \nabla f(x), & \text{when }f(x)<g(x) \\
                           \nabla g(x), & \text{when }f(x)>g(x) \\
                           \nabla f(x)=\nabla g(x), & \text{when }f(x)=g(x)
                         \end{array}
                       \right.
\end{equation*}
(see \cite{LL01}). Hence $(\sobo{1}{p},\vee,\wedge)$ is a lattice.

Let $\lpa\subset\sobo{1}{p}$ be the space of piecewise affine functions on $\R^n$.
Here, a function $\ell:\R^n\rightarrow\R$ is called \emph{piecewise affine},
if it is continuous and there exists a finite number of $n$-dimensional simplices
$\triangle_1,\ldots,\triangle_m\subset\R^n$ with pairwise disjoint interiors
such that the restriction of $\ell$ to each $\triangle_i$ is affine
and $\ell=0$ outside $\triangle_1\cup\cdots\cup\triangle_m$.
The simplices $\triangle_1,\ldots,\triangle_m$ are called a triangulation
of the support of $\ell$. Let $V$ denote the set of vertices of this triangulation.
We further have that $V$ and the values $\ell(v)$ for $v\in V$ completely determine $\ell$.
Piecewise affine functions lie dense in $\sobo{1}{p}$ (see \cite{Leo09}).

For $P\in\poly{n}$, define the piecewise affine function $\ell_P$ by requiring that $\ell_P(0)=1$,
$\ell_P(x)=0$ for $x\notin P$, and $\ell_P$ is affine on each simplex with apex at the origin
and base among facets of $P$. Define $\pap{n}\subset\lpa$ as the set of all $\ell_P$
for $P\in\poly{n}$. For $\phi\in\GL{n}$, we have $\ell_{\phi P}=\ell_P\circ\phi^{-1}$.
We remark that multiples and translates of $\ell_P\in\pap{n}$
correspond to linear elements within the theory of finite elements.

For $P\in\poly{n}$, let $F_1,\ldots,F_m$ be the facets of $P$.
For each facet $F_i$, let $u_i$ be its unit outer normal vector
and $T_i$ the convex hull of $F_i$ and the origin.
For $x\in T_i$, notice that
\[\ell_P(x)=-\inp{\frac{u_i}{h(P,u_i)},x}+1\]
and
\[\nabla\ell_P(x)=-\frac{u_i}{h(P,u_i)}.\]
It follows that
\begin{eqnarray*}
    \normk{p}{\ell_P}^p &=& \int_{\R^n}\abs{\ell_P}^pdx\\
    &=& p\int_0^1t^{p-1}\abs{\set{\ell_P>t}}dt\\
    &=& p\abs{P}\int_0^1t^{p-1}(1-t)^ndt\\
    &=& c_{p,n}\abs{P},
\end{eqnarray*}
where $c_{p,n}=\frac{\Gamma(p+1)\Gamma(n+1)}{\Gamma(n+p+1)}={{n+p}\choose n}^{-1}$, and
\begin{eqnarray*}
    \normk{p}{\nabla\ell_P}^p &=& \int_{\R^n}\abs{\nabla\ell_P(x)}^pdx\\
    &=& \sum_{i=1}^m\int_{T_i}\abs{\frac{u_i}{h(P,u_i)}}^pdx\\
    &=& \sum_{i=1}^m\frac{\abs{T_i}}{h^p(P,u_i)}\\
    &=& \frac{1}{n}\sum_{i=1}^m\abs{F_i}h^{1-p}(P,u_i)\\
    &=& \frac{1}{n}S_p(P),
\end{eqnarray*}
where $S_p(P)$ is the $p$-surface area of $P$.

Let $z:\sobo{1}{p}\rightarrow\R$ be a functional.
The functional is called \emph{continuous} if for every sequence $f_k\in\sobo{1}{p}$
with $f_k\rightarrow f$ as $k\rightarrow\infty$ with respect to the Sobolev norm,
we have $\abs{z(f_k)-z(f)}\rightarrow0$ as $k\rightarrow\infty$.
It is said to be \emph{translation invariant} if $z(f\circ\tau^{-1})=z(f)$ for all $f\in\sobo{1}{p}$
and translations $\tau$. Furthermore, we say it is homogeneous if for some $q\in\R$,
we have $z(sf)=\abs{s}^qz(f)$ for all $f\in\sobo{1}{p}$ and $s\in\R$.
Finally, we call the functional \emph{$\SL{n}$ invariant} if $z(f\circ\phi^{-1})=z(f)$
for all $f\in\sobo{1}{p}$ and $\phi\in\SL{n}$. Denote the derivative
of the map $s\mapsto z(sf)$ by
\[Dz_f(s)=\lim_{\e\rightarrow0}\frac{z\left((s+\e)f\right)-z(sf)}{\e}\]
whenever it exists.

We provide a set of examples of valuations on $\sobo{1}{p}$ in the following theorem.
\begin{thm}\label{thm:phi}
    Let $h\in\G$ be a continuous function.
    Then, for every $f\in\sobo{1}{p}$, the functional
    \[z(f)=\int_{\R^n}h\circ f\]
    is a continuous, $\SL{n}$ and translation invariant valuation.
    Furthermore, $z(0)=0$ and the map $s\mapsto z(sf)$ belongs to $\B$
    for every $s\in\R$ and $f\in\pap{n}$.
\end{thm}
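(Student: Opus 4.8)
The plan is to verify the four claimed properties in turn, the valuation property being the crucial one. First I would address \emph{well-definedness}: given a continuous $h\in\G$, the growth condition \eqref{eqn:gc} gives a bound $\abs{h(t)}\le c_1\abs{t}^p+c_2\abs{t}^{p^\ast}$ for all $t\in\R$ (since $h$ is continuous, hence bounded on compact sets, one can absorb the behavior away from $0$ and $\infty$). Then $\abs{h\circ f}\le c_1\abs{f}^p+c_2\abs{f}^{p^\ast}$ pointwise, and by the Sobolev embedding $\sobo{1}{p}\hookrightarrow L^q(\R^n)$ for $p\le q\le p^\ast$ (Theorem \ref{thm:SGN} together with $f\in L^p$), both terms are integrable; hence $z(f)=\int_{\R^n}h\circ f$ is finite. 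Since $h(0)=0$ follows from $h(x)=O(x^p)$ as $x\to0$, we get $z(0)=\int_{\R^n}h(0)=0$.

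Next, the \emph{valuation property}. For $f,g\in\sobo{1}{p}$, at each point $x\in\R^n$ the pair $\{(f\vee g)(x),(f\wedge g)(x)\}$ equals the pair $\{f(x),g(x)\}$, so $h\circ(f\vee g)(x)+h\circ(f\wedge g)(x)=h\circ f(x)+h\circ g(x)$ pointwise; integrating over $\R^n$ (all four integrands being integrable by the previous paragraph, applied to $f$, $g$, $f\vee g$, $f\wedge g$, all of which lie in $\sobo{1}{p}$) yields \eqref{eqn:val}. \emph{Translation invariance} is the change of variables $x\mapsto\tau x$, which preserves Lebesgue measure. \emph{$\SL{n}$ invariance} is likewise the change of variables $x\mapsto\phi x$ with $\abs{\det\phi}=1$, so $\int_{\R^n}h\circ(f\circ\phi^{-1})=\int_{\R^n}h\circ f$.

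For \emph{continuity}: let $f_k\to f$ in the Sobolev norm. By Theorem \ref{thm:cvg}, every subsequence has a further subsequence $f_{k_j}\to f$ a.e.; along it, $h\circ f_{k_j}\to h\circ f$ a.e. by continuity of $h$. To pass to the limit under the integral one needs domination: since $f_{k_j}\to f$ in $L^q(\R^n)$ for $q=p$ and $q=p^\ast$ (by the continuous embedding), one extracts a further subsequence along which $\abs{f_{k_j}}\le G_p\in L^p$ and $\abs{f_{k_j}}\le G_{p^\ast}\in L^{p^\ast}$ for fixed dominating functions (a standard consequence of $L^q$ convergence), so $\abs{h\circ f_{k_j}}\le c_1 G_p^p+c_2 G_{p^\ast}^{p^\ast}\in L^1$; dominated convergence gives $z(f_{k_j})\to z(f)$. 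Since every subsequence of $\{z(f_k)\}$ has a sub-subsequence converging to $z(f)$, the whole sequence converges, proving continuity.

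Finally, for $f=\ell_P\in\pap{n}$ one must check $s\mapsto z(s\ell_P)$ lies in $\B$. Using that $\ell_P$ is affine with slope $-u_i/h(P,u_i)$ on each $T_i$ and $0$ outside $P$, one computes, as in the layer-cake computation already carried out in the text, that $z(s\ell_P)=\int_{\R^n}h(s\ell_P(x))\,dx$ reduces to a one-dimensional integral: writing $F(s)=z(s\ell_P)$, one finds $F(s)=c\,\abs{P}\int_0^1 h(s(1-t))\,n(1-t)^{n-1}\,dt$ after integrating out the directions parallel to the facets, i.e. $F(s)=c\,\abs{P}\,s^{-n}\int_0^s h(r)(s-r)^{n-1}\,dr/s^{\,?}$ — more cleanly, the substitution $r=s(1-t)$ gives $F(s)=c\,\abs{P}\,s^{-n}\int_0^{\abs{s}} h(r)\,n\,r^{n-1}\,dr$ up to sign, exhibiting $F$ as (a constant times) the $(n-1)$-fold iterated integral of $h$, normalized. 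From this representation, since $h\in\lbv$, one reads off that $F\in C^n(\R)$ with $F^{(n)}\in\lbv$, and that $s^k F^{(k)}(s)$ is a linear combination of terms $\int_0^{\abs s} h(r) r^{j}\,dr\cdot s^{k-n}$-type expressions whose growth at $0$ and $\infty$ is controlled by the $O(x^p)$ and $O(x^{p^\ast})$ bounds on $h$, so each $s^k F^{(k)}(s)$ satisfies \eqref{eqn:gc}; hence $F\in\B$.

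I expect the main obstacle to be the last step: pinning down the exact one-dimensional integral representation of $s\mapsto z(s\ell_P)$ and then carefully differentiating it $n$ times to verify that every $s^kF^{(k)}(s)$ inherits the two-sided growth condition \eqref{eqn:gc} from $h$ — this is a bookkeeping-heavy computation with the Leibniz rule applied to the product $s^{-n}\int_0^{\abs s}h(r)nr^{n-1}dr$, and care is needed near $s=0$ to see that no negative powers of $s$ actually survive.
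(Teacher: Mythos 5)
Your verification of the valuation property, the two invariances, $z(0)=0$, and continuity is correct and essentially matches the paper's argument; indeed your continuity step (passing to a subsequence of $f_k\to f$ in $L^p$ and $L^{p^\ast}$ that admits fixed dominating functions) is a cleaner justification of the dominated-convergence argument than the paper's decomposition of $\R^n$ into three regions according to the size of $\abs{f}$.

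The genuine gap is in the last step, and it is not only the bookkeeping you anticipate: your one-dimensional representation is wrong. Since $\abs{\set{\ell_P>t}}=\abs{P}(1-t)^n$, the pushforward of Lebesgue measure on $P$ under $\ell_P$ has density $n\abs{P}(1-t)^{n-1}$ at level $t$, so for $s>0$
\[
z(s\ell_P)=n\abs{P}\int_0^1h(st)(1-t)^{n-1}\,dt=\frac{n\abs{P}}{s^{n}}\int_0^{s}h(r)(s-r)^{n-1}\,dr,
\]
with kernel $(s-r)^{n-1}$, not $r^{n-1}$ as in your formula. The difference is essential: by the Cauchy formula for repeated integration, $\int_0^sh(r)(s-r)^{n-1}\,dr$ is $(n-1)!$ times the $n$-fold iterated primitive of $h$, hence of class $C^n$ with $n$-th derivative $(n-1)!\,h\in\lbv$; by contrast $\int_0^sh(r)r^{n-1}\,dr$ is in general only $C^1$, so your representation would not even give $F\in C^n(\R)$. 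Moreover, even with the correct kernel the remaining claims cannot simply be ``read off'': you must show that after dividing by $s^n$ no negative powers of $s$ survive at $s=0$ (this uses that the iterated primitive vanishes to order $n$ there, which rests on $h(x)=O(x^p)$ as $x\to0$), and you must verify \eqref{eqn:gc} for each $s^kF^{(k)}(s)$, $1\le k\le n$. The paper does this by writing $h(s)=\nu([0,s))$ for a signed measure $\nu$ and proving the inductive identities \eqref{eqn:diff1}--\eqref{eqn:diff2}, which express $\sum_{j=0}^k{k\choose j}\frac{n!}{(n-k+j)!}s^{j}\Dzlp{j}$ as $\frac{n!}{(n-k)!}\abs{P}\int_0^s\left(\frac{s-t}{s}\right)^{n-k}d\nu(t)$ and thereby control it by $h(s)$; some identity of this kind is needed to turn your final paragraph into a proof.
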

\begin{proof}
    We can easily prove this theorem by verifying the following distinguishable
    properties of the functional.

    1. Valuation.
    Let $f,g\in\sobo{1}{p}$ and $E=\set{x\in\R^n:f(x)\geq g(x)}$. Then
    \begin{eqnarray*}
        z(f\vee g)+z(f\wedge g) &=& \int_{\R^n}h\circ(f\vee g)+\int_{\R^n}h\circ(f\wedge g)\\
        &=& \int_Eh\circ(f\vee g)+\int_{\R^n\setminus E}h\circ(f\vee g)\\
        &&+\int_Eh\circ(f\wedge g)+\int_{\R^n\setminus E}h\circ(f\wedge g)\\
        &=& \int_Eh\circ f+\int_{\R^n\setminus E}h\circ g+\int_Eh\circ g+\int_{\R^n\setminus E}h\circ f\\
        &=& \int_{\R^n}h\circ f+\int_{\R^n}h\circ g\\
        &=& z(f)+z(g).
    \end{eqnarray*}

    2. Continuity.
    Let $f\in\sobo{1}{p}$ and $\set{f_i}$ be a sequence in $\sobo{1}{p}$
    that converges to $f$. For every subsequence $\set{z(f_{i_j})}\subset\set{z(f_i)}$,
    we are going to show that there exists a subsequence $\setx{z(\fijk)}$
    that converges to $z(f)$. Let $\setx{f_{i_j}}$ be a subsequence of $\set{f_i}$.
    Then, $\setx{f_{i_j}}$ converges to $f$ in $\sobo{1}{p}$.
    Thus, there exists a subsequence $\setx{\fijk}\subset\setx{f_{i_j}}$ with
    $\fijk\rightarrow f$ a.e. as $k\rightarrow\infty$.
    Furthermore, since $h$ is continuous, we obtain $h\circ\fijk\rightarrow h\circ f$
    a.e. as $k\rightarrow\infty$. Since $h$ satisfies \eqref{eqn:gc},
    there exist $\d>0$ and $M_1>0$ such that $\abs{h(x)}\leq M_1\abs{x}^p$ when $\abs{x}<\d$.
    Let $E_1=\set{\abs{f}<3\d/4}$. Since $\fijk\rightarrow f$ a.e. as $k\rightarrow\infty$,
    for such $\d>0$, there exists $N_1>0$ such that $\absx{\fijk-f}<\d/4$ a.e. whenever $k>N_1$.
    Thus, $\absx{\fijk}<\d$ a.e. on $E_1$.
    Hence, for such $k$, $\absx{h\circ\fijk}\leq M_1\absx{\fijk}^p$ a.e. on $E_1$.
    \mbox{Since} $M_1\int_{E_1}\absx{\fijk}^p\leq M_1\normkx{p}{\fijk}^p<\infty$,
    by the dominated convergence theorem, we have
    \[\lim_{k\rightarrow\infty}\int_{E_1}h\circ\fijk=\int_{E_1}h\circ f.\]
    On the other hand, there exist $M_0>0$ and $M_2>0$
    such that, whenever $\abs{x}>M_0$, we obtain $\abs{h(x)}\leq M_2\abs{x}^{p^\ast}$.
    Let $E_2=\set{\abs{f}>3M_0/2}$. Since $\fijk\rightarrow f$ a.e. as $k\rightarrow\infty$,
    there exists $N_2>0$ for such $M_0>0$ such that $\absx{\fijk-f}<M_0/2$ a.e. whenever $k>N_2$.
    Thus, $\absx{\fijk}>M_0$ a.e. on $E_2$.
    Hence, for such $k$, $\absx{h\circ\fijk}\leq M_2\absx{\fijk}^{p^\ast}$ a.e. on $E_2$.
    Since
    \[M_2\int_{E_2}\abs{\fijk}^{p^\ast}\leq M_2\normk{p^\ast}{\fijk}^{p^\ast}\leq CM_2\normk{p}{\nabla\fijk}^{p^\ast}<\infty,\]
    by the dominated convergence theorem, we obtain
    \[\lim_{k\rightarrow\infty}\int_{E_2}h\circ\fijk=\int_{E_2}h\circ f.\]
    Now let $E_3=\R^n\setminus(E_1\cup E_2)$ and $N=\max\set{N_1,N_2}$.
    Then, for $k>N$, we have $\d/2\leq\absx{\fijk}\leq 2M_0$ a.e. on $E_3$.
    Thus, for such $k$, since $h$ is continuous, there exists $\gamma>0$
    such that $\absx{h\circ\fijk}\leq\gamma\absx{\fijk}$ a.e. on $E_3$. Since
    \[\gamma\int_{E_3}\abs{\fijk}\leq\gamma\normk{1}{\fijk}\leq\gamma\normk{p}{\fijk}<\infty,\]
    again by the dominated convergence theorem, we obtain
    \[\lim_{k\rightarrow\infty}\int_{E_3}h\circ\fijk=\int_{E_3}h\circ f.\]

    3. $\SL{n}$ invariance.
    Let $f\in\sobo{1}{p}$ and $\phi\in\SL{n}$. Then
    \[z(f\circ\phi^{-1})=\int_{\R^n}h\circ f\circ\phi^{-1}=\int_{\R^n}h\left(f\left(\phi^{-1}x\right)\right)dx.\]
    By setting $y=\phi^{-1}x$, we obtain
    \begin{eqnarray*}
        z(f\circ\phi^{-1}) &=& \int_{\R^n}h\left(f(y)\right)dy\\
        &=& \int_{\R^n}h\circ f=z(f).
    \end{eqnarray*}

    4. Translation invariance.
    Let $f\in\sobo{1}{p}$ and $\tau$ be a translation. Then
    \[z(f\circ\tau^{-1})=\int_{\R^n}h\circ f\circ\tau^{-1}=\int_{\R^n}h\left(f\left(\tau^{-1}x\right)\right)dx.\]
    By setting $y=\tau^{-1}x$, we obtain
    \begin{eqnarray*}
        z(f\circ\tau^{-1}) &=& \int_{\R^n}h\left(f(y)\right)dy\\
        &=& \int_{\R^n}h\circ f=z(f).
    \end{eqnarray*}

    5. $z(0)=0$. This fact follows from the continuity of $z$ and \eqref{eqn:gc}.

    6. Differentiability.
    Let $\ell_P\in\pap{n}$ where $P\in\poly{n}$. Without loss of generality,
    we assume $s>0$. Indeed, set
    \[h^e(x)=\frac{h(x)+h(-x)}{2}\text{ and }h^o(x)=\frac{h(x)-h(-x)}{2},\]
    for every $x\in\R$, and the case $s<0$ follows from
    \begin{eqnarray*}
        z(-s\ell_P) &=& \int_{\R^n}(h^e+h^o)\circ(-s\ell_P)\\
        &=& \int_{\R^n}\left(h^e\circ(-s\ell_P)+h^o\circ(-s\ell_P)\right)\\
        &=& \int_{\R^n}\left(h^e\circ(s\ell_P)-h^o\circ(s\ell_P)\right).
    \end{eqnarray*}
    Since $h\in\lbv$, there exists a signed measure $\nu$ on $\R$
    such that $h(s)=\nu([0,s))$ for every $s>0$
    (this can be done by setting $\nu=\mathbbm{1}_{[0,s)}$ in \eqref{eqn:delta}).
    By the layer cake representation, we have
    \begin{eqnarray*}
        z(s\ell_P) &=& \int_{\R^n}h\circ(s\ell_P)\\
        &=& \int_0^s\abs{\set{s\ell_P>t}}d\nu(t)=\abs{P}\int_0^s\left(\frac{s-t}{s}\right)^nd\nu(t).
    \end{eqnarray*}
    In other words, we obtain
    \begin{equation}\label{eqn:fune}
        s^nz(s\ell_P)=\abs{P}\int_0^s(s-t)^nd\nu(t).
    \end{equation}
    We will now show the differentiability by induction.
    Let $k\geq2$ and $\psi_k(s)$ be the $k$th derivative of
    $\int_0^s(s-t)^nd\nu(t)$ with respect to $s$. We have
    \begin{equation}\label{eqn:diff1}
        \psi_k(s)=\frac{n!}{(n-k)!}\int_0^s(s-t)^{n-k}d\nu(t).
    \end{equation}
    In particular, we obtain $\psi_n(s)=n!h(s)$.
    On the other hand, differentiating the left hand side of \eqref{eqn:fune} gives
    \[\psi_1(s)\abs{P}=ns^{n-1}z(s\ell_P)+s^n\Dzlp{}.\]
    By induction, it follows that
    \begin{equation}\label{eqn:diff2}
        \psi_k(s)\abs{P} = \sum_{j=0}^k{k\choose j}\frac{n!}{(n-k+j)!}s^{n-k+j}\Dzlp{j}.
    \end{equation}
    In particular, we obtain
    \[\psi_n(s)\abs{P}=n!\sum_{j=0}^n{n\choose j}s^j\Dzlp{j},\]
    which coincides with $n!\abs{P}h(s)$.
    Since $h$ is a continuous locally $BV$ function,
    we have the desired differentiability of $s\mapsto z(s\ell_P)$.

    7. Growth condition.
    First of all, by \eqref{eqn:fune},
    \begin{eqnarray*}
        z(s\ell_P) &=& \abs{P}\int_0^s\left(\frac{s-t}{s}\right)^nd\nu(t)\\
        &\leq& \abs{P}\int_0^sd\nu(t)\\
        &=& \abs{P}h(s)
    \end{eqnarray*}
    satisfies \eqref{eqn:gc}. As shown in the previous steps
    (\eqref{eqn:diff1} and \eqref{eqn:diff2}),
    for every integer $1\leq k\leq n$,
    \[\sum_{j=0}^k{k\choose j}\frac{n!}{(n-k+j)!}s^{n-k+j}\Dzlp{j}=\frac{n!}{(n-k)!}\abs{P}\int_0^s(s-t)^{n-k}d\nu(t).\]
    That is
    \begin{eqnarray*}
        \sum_{j=0}^k{k\choose j}\frac{n!}{(n-k+j)!}s^j\Dzlp{j} &=& \frac{n!}{(n-k)!}\abs{P}\int_0^s\left(\frac{s-t}{s}\right)^{n-k}d\nu(t)\\
        &\leq& \frac{n!}{(n-k)!}\abs{P}h(s)
    \end{eqnarray*}
    also satisfies \eqref{eqn:gc}.
\end{proof}


\section{The characterization of homogeneous valuations}\label{sec:homo}

First, we need the following reduction similar to \cite[Lemma 8]{Lud12}.
We include the proof for the sake of completeness.

\begin{lem}\label{thm:L1p}
    Let $z_1,z_2:\lpa\rightarrow\R$ be continuous and translation invariant valuations
    satisfying $z_1(0)=z_2(0)=0$. If $z_1(sf)=z_2(sf)$ for all $s\in\R$ and $f\in\pap{n}$, then
    \begin{equation}\label{eqn:equal}
        z_1(f)=z_2(f)
    \end{equation}
    for all $f\in \lpa.$
\end{lem}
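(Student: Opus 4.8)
The plan is to reduce an arbitrary piecewise affine function to a sum of "pyramid" functions $\ell_P$ (and their multiples/translates) using the valuation property, so that the hypothesis $z_1(sf)=z_2(sf)$ on $\pap{n}$ propagates to all of $\lpa$. I would argue by induction on the number $m$ of simplices in a triangulation of the support of $f\in\lpa$, refining the triangulation if necessary so that all simplices meet along common faces. The base case $m=1$ is a single simplex $\triangle$; on it, $f$ is affine, and after composing with a translation (using translation invariance) one can arrange that the graph of $f$ over $\triangle$ is a piece of a cone with apex over a vertex, i.e.\ $f$ agrees on $\triangle$ with a rescaled $\ell_P$ — more carefully, one writes $f|_\triangle$ as a lattice combination (pointwise max/min) of at most $n+1$ functions each of which is, up to translation and scaling, of the form $s\ell_{P}$; then inclusion–exclusion via the valuation identity (\ref{eqn:val}) together with $z_i(0)=0$ reduces $z_i(f)$ to an alternating sum of values $z_i(s\ell_{P_j})$, on which $z_1$ and $z_2$ agree by hypothesis.

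For the inductive step, given $f$ supported on $\triangle_1\cup\cdots\cup\triangle_m$, I would split off one simplex: let $f_1$ be the function that equals $f$ on $\triangle_1$ and is extended affinely (then truncated to $0$) so as to be handled by the base case, and let $f'$ capture the remaining simplices. The point is to choose auxiliary piecewise affine functions $g,h\in\lpa$ with $g\vee h$ and $g\wedge h$ both piecewise affine and both having triangulations with fewer "new" pieces than $f$, and with $f$ appearing among $g, h, g\vee h, g\wedge h$; applying (\ref{eqn:val}) to $z_1$ and $z_2$ and invoking the induction hypothesis on the lower-complexity terms forces $z_1(f)=z_2(f)$. Concretely, one can take $g$ to be $f$ on $\triangle_1$ and a large affine extension elsewhere, $h$ to be $f$ on $\triangle_2\cup\cdots\cup\triangle_m$ and matched appropriately on $\triangle_1$, so that $g\wedge h = f$ and $g\vee h$, $g$, $h$ each involve strictly fewer simplices than $m$ in the relevant sense; continuity is used only to pass to genuine elements of $\lpa$ in the limiting/truncation arguments, and translation invariance is used to normalize apices.

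The main obstacle I anticipate is the bookkeeping in the inductive step: ensuring that the lattice operations $g\vee h$, $g\wedge h$ actually produce piecewise affine functions (the gradients must agree along the common interfaces, per the formulas for $\nabla(f\vee g)$ recorded in Section~\ref{sec:P}) and that the complexity measure — number of simplices, or number of "non-pyramid" simplices — genuinely drops for all four terms except $f$ itself. A clean way to manage this is to first prove the statement for functions supported on a fixed simplicial complex and affine on each cell with a single common apex, then handle a general triangulation by a secondary induction on the number of vertices not equal to that apex, each time using (\ref{eqn:val}) to "peel" the function along a hyperplane through the apex. Once every $f\in\lpa$ is written, via a finite sequence of valuation identities, as an alternating combination of functions of the form $s\ell_P$ composed with translations, the conclusion (\ref{eqn:equal}) is immediate from the hypothesis $z_1(sf)=z_2(sf)$ on $\pap{n}$ together with translation invariance and $z_1(0)=z_2(0)=0$.
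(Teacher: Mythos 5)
Your overall strategy --- propagate the hypothesis from $\pap{n}$ to all of $\lpa$ by an induction driven by the valuation identity, translation invariance, and continuity --- is the same as the paper's, but two essential pieces are missing and one step as written does not work. First, the base case and the per-simplex decomposition are problematic: a continuous piecewise affine function supported on a \emph{single} $n$-simplex vanishes on the boundary of that simplex and is therefore identically zero, and more generally ``$f$ on $\triangle_1$, truncated to $0$ elsewhere'' is not an element of $\lpa$ unless $f$ already vanishes on $\partial\triangle_1$; so the plan of writing $f|_{\triangle}$ as a lattice combination of translated, scaled $\ell_P$'s does not get off the ground simplex by simplex. Second, and more seriously, the complexity measure in your inductive step --- the number of simplices --- does not decrease under the lattice operations: $g\vee h$ typically has at least as many pieces as $g$ and $h$, and you yourself flag this as the main obstacle without resolving it. That resolution is the actual content of the proof.

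The paper closes exactly this gap by a chain of preliminary reductions before the induction. One first reduces to non-negative $f$ via $z_i(f)=z_i(f\vee 0)+z_i(f\wedge 0)$ (using $z_i(0)=0$ and an even/odd splitting of $z_i$), then to $f$ that are \emph{concave on their supports} by writing $f=f_1\vee\cdots\vee f_m$ with each $f_i$ non-negative and concave and applying inclusion--exclusion (meets of concave functions remain concave, so all terms $f_J$ are again of the reduced type), and then, by continuity, to $f$ whose subgraph polytope $F\subset\R^{n+1}$ is non-singular. Only then does the induction run, and the quantity that decreases is the number of facet hyperplanes of the \emph{convex} polytope $F$ not passing through its top vertex $\bar p$: one peels off the facet hyperplanes through a lowest vertex $p_0$, obtaining $f\vee\ell=\bar f$ and $f\wedge\ell=\bar\ell$ with $\ell$ and $\bar\ell$ scaled translates of elements of $\pap{n}$ and with $\bar F$ having strictly fewer such facets (non-singularity guarantees $\bar F$ is bounded). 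Concavity is what makes both the peeling and the count work; without the reduction to concave functions, no count of simplices or vertices is known to drop for all terms in the valuation identity. Your closing suggestion of a secondary induction over a complex with a single common apex points in the right direction, but as stated the argument is a plan rather than a proof.
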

\begin{proof}
    We first make the following four reduction steps for \eqref{eqn:equal}.

    1. We begin by considering all $f\in \lpa$ that are non-negative.
    Since $z_1$ and $z_2$ are valuations
    satisfying $z_1(0)=z_2(0)=0$, we have for $i=1,2$,
    \[z_i(f\vee0)+z_i(f\wedge0)=z_i(f)+z_i(0)=z_i(f).\]
    For $i=1,2$, let
    \[z_i^e(f)=\frac{z_i(f)+z_i(-f)}{2},z_i^o(f)=\frac{z_i(f)-z_i(-f)}{2}\]
    and hence $z_i(f)=z_i^e(f)+z_i^o(f)$ for all $f\in \lpa$.
    Therefore, we have
    \[z_i^e(f\wedge0)=z_i^e\left(-((-f)\wedge0)\right)=z_i^e\left((-f)\wedge0\right)\]
    and
    \[z_i^o(f\wedge0)=z_i^o\left(-((-f)\wedge0)\right)=-z_i^o\left((-f)\wedge0\right).\]
    Thus, it suffices to show that \eqref{eqn:equal} holds for all non-negative $f\in\lpa$.

    2. Next, let $f\in \lpa$ where the values $f(v)$ are distinct for $v\in V$ with $f(v)>0$.
    Let $f$ not vanish identically and $\mathcal{S}$ be the triangulation
    of the support of $f$ in $n$-dimensional simplices
    such that $f\big|_\triangle$ is affine for each simplex $\triangle\in\mathcal{S}$.
    Denote by $V$ the (finite) set of vertices of $\mathcal{S}$.
    Note that $f$ is determined by its value on $V$.
    By the continuity of $z_1$ and $z_2$, we have the reduction as
    there always exists an approximation of $f$
    by $g\in \lpa$ where the values $g(v)$ are distinct for $v\in V$ with $g(v)>0$.

    3. We now consider all $f\in \lpa$ that are concave on their supports.
    Let $f_1,\ldots,f_m\in \lpa$ be non-negative
    and concave on their supports such that
    \begin{equation}\label{eqn:concave}
        f=f_1\vee\cdots\vee f_m.
    \end{equation}
    For $i=1,2$, by the inclusion-exclusion principle, we obtain
    \[z_i(f)=z_i(f_1\vee\cdots\vee f_m)=\sum_J(-1)^{\abs{J}-1}z_i(f_J),\]
    where $J$ is a non-empty subset of $\set{1,\ldots,m}$ and
    \[f_J=f_{j_1}\wedge\cdots\wedge f_{j_k},\]
    for $J=\set{j_1,\ldots,j_k}$. Indeed, such representation in \eqref{eqn:concave} exists.
    We determine the $f_i$'s by their value on $V$.
    Set $f_i(v)=f(v)$ on the vertices $v$ of the simplex $\triangle_i$ of $\mathcal{S}$.
    Choose a polytope $P_i$ containing $\triangle_i$ and set $f_i(v)=0$
    on the vertices $v$ of $P_i$. If the $P_i$'s are chosen suitably small, \eqref{eqn:concave} holds.
    The reduction follows since the meet of concave functions
    is still concave.

    4. Then take all functions $f\in \lpa$ such that $F$ defined below, is not singular.
    Given a function $f\in \lpa$,
    let $F\subset\R^{n+1}$ be the compact polytope bounded
    by the graph of $f$ and the hyperplane $\set{x_{n+1}=0}$.
    We say $F$ is \emph{singular} if $F$ has $n$ facet hyperplanes that intersect in a line $L$
    parallel to $\set{x_{n+1}=0}$ but not contained in $\set{x_{n+1}=0}$.
    Similar to the second step, by continuity of $z_1$ and $z_2$,
    it suffices to show \eqref{eqn:equal} for $f\in \lpa$
    such that $F$ is not singular.

    Let a function $f$ satisfying reduction steps 1-4 be given.
    Denote by $\bar{p}$ the vertex of $F$ with the largest $x_{n+1}$-coordinate.
    We are now going to show \eqref{eqn:equal} by induction on
    the number $m$ of facet hyperplanes of $F$ that are not passing through $\bar{p}$.
    In the case $m=1$, a scaled translate of $f$ is in $\pap{n}$.
    Since $z_1$ and $z_2$ are translation invariant, equation \eqref{eqn:equal} holds.
    Let $m\geq2$. Further let $p_0=\left(x_0,f(x_0)\right)$ be a vertex of $F$
    with minimal $x_{n+1}$-coordinate
    and $H_1,\ldots,H_j$ be the facet hyperplanes of $F$ through $p_0$
    which do not contain $\bar{p}$. Notice that there exists at least one such hyperplane.
    Write $\bar{F}$ as the polytope bounded by the intersection of
    all facet hyperplanes of $F$ other than $H_1,\ldots,H_j$.
    Since $F$ is not singular, $\bar{F}$ is bounded.
    Thus, there exists an $f\in \lpa$ that corresponds to $F$.
    Note that $\bar{F}$ has at most $(m-1)$ facet hyperplanes not containing $\bar{p}$.
    Let $\bar{H}_1,\ldots,\bar{H}_i$ be the facet hyperplanes of $\bar{F}$ that contain $p_0$.
    Choose hyperplanes $\bar{H}_{i+1},\ldots,\bar{H}_k$ also containing $p_0$
    such that the hyperplanes $\bar{H}_1,\ldots,\bar{H}_k$ and $\set{x_{n+1}=0}$
    enclose a pyramid with apex at $p_0$ that is contained in $\bar{F}$
    and has $x_0$ in its base with $\bar{H}_1,\ldots,\bar{H}_i$ among its facet hyperplanes.
    Therefore, there exists a piecewise affine function $\ell$
    corresponding to this pyramid. Moreover, a scaled translate of $\ell$ is in $\pap{n}$.
    We also obtain that a scaled translate of $\bar{\ell}=f\wedge\ell$ is in $\pap{n}$.
    To summarize, scaled translates of $\bar{\ell}$ and $\ell$ are in $\pap{n}$,
    the polytope $\bar{F}$ has at most $(m-1)$ facet hyperplanes not containing $\bar{p}$, and
    \[f\vee\ell=\bar{f}\text{ and }f\wedge\ell=\bar{\ell}.\]
    Applying valuations $z_1$ and $z_2$, we have, for $i=1,2$,
    \[z_i(f)+z_i(\ell)=z_i(\bar{f})+z_i(\bar{\ell}).\]
    Thus, the induction hypotheses yields the desired result.
\end{proof}


The classification will also make use of the following elementary fact.

\begin{rem}\label{thm:bigo}
    Let $f$ and $g$ be functions on $\R$.
    If $f(x)\sim o\left(g(x)h(x)\right)$ as $x\rightarrow0$,
    for each function $h$ on $\R$ with $\lim_{x\rightarrow0}h(x)=\infty$,
    then
    \[f(x)\sim O(g(x))\text{ as }x\rightarrow0.\]
    This can be seen by the following simple argument.
    Suppose $\abs{f(x)\big/g(x)}\rightarrow\infty$ as $x\rightarrow0$.
    Let $h=\sqrt{\abs{f\big/g}}$. It is clear that $h(x)\rightarrow\infty$
    as $x\rightarrow0$. But now
    \[\abs{f(x)\big/\left(g(x)h(x)\right)}=\sqrt{\abs{f(x)\big/g(x)}}=h(x)\rightarrow\infty\text{ as }x\rightarrow0,\]
    which yields a contradiction.
    A similar argument also works for the limit as $x\rightarrow\infty$.
\end{rem}


\begin{lem}\label{thm:P1p}
    Let $z:\lpa\rightarrow\R$ be a continuous, $\SL{n}$
    and translation invariant valuation with $z(0)=0$.
    Then there exists a continuous function $c:\R\rightarrow\R$
    satisfying \eqref{eqn:gc} such that
    \[z(s\ell_P)=c(s)\abs{P},\]
    for every $s\in\R$ and $\ell_P\in\pap{n}$.
\end{lem}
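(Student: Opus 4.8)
The plan is to fix $s\in\R$, study the functional $\mu_s\colon\poly{n}\to\R$ given by $\mu_s(P)=z(s\ell_P)$, show it meets the hypotheses of Theorem~\ref{thm:HP}, and then remove the superfluous terms this produces.

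I would first record that $\ell_P(tu)=\max\set{1-t/\rho_P(u),\,0}$ for $u\in\sr$, $t\geq0$, where $\rho_P$ is the radial function of $P$: indeed $\ell_P$ is affine on the cone over each facet and equals $1$ at the origin and $0$ on that facet, hence is linear along each ray from the origin. Since $\set{t\geq0:tu\in P\cup Q}$ is the union of the analogous sets for $P$ and $Q$, we get $\rho_{P\cup Q}=\max\set{\rho_P,\rho_Q}$ when $P\cup Q$ is convex and, always, $\rho_{P\cap Q}=\min\set{\rho_P,\rho_Q}$; hence $\ell_P\vee\ell_Q=\ell_{P\cup Q}$ whenever $P\cup Q$ is convex and $\ell_P\wedge\ell_Q=\ell_{P\cap Q}$ always. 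Because $(s\ell_P)\vee(s\ell_Q)$ and $(s\ell_P)\wedge(s\ell_Q)$ are $s\ell_{P\cup Q}$ and $s\ell_{P\cap Q}$ (with the two interchanged when $s<0$, which does not change their sum), the valuation property of $z$ on $\lpa$ shows $\mu_s$ is a valuation on $\poly{n}$; and $\ell_{\phi P}=\ell_P\circ\phi^{-1}$ together with the $\SL{n}$ invariance of $z$ shows $\mu_s$ is $\SL{n}$ invariant.

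The delicate point is upper semicontinuity, which I would get by showing that $P\mapsto\ell_P$ is continuous from $\poly{n}$ (Hausdorff metric) into $\sobo{1}{p}$. Writing $\ell_P=\max\set{1-g_P,\,0}$ with $g_P$ the convex gauge of $P$: if $P_k\to P$ in $\poly{n}$ then all $P_k$ lie between two fixed balls, so the $g_{P_k}$ are uniformly Lipschitz and converge locally uniformly to $g_P$, and the $\ell_{P_k}$ are uniformly bounded with support in a fixed ball, giving $\ell_{P_k}\to\ell_P$ in $L^p$; since convex functions converging locally uniformly have gradients converging a.e., and $\abs{\nabla\ell_{P_k}}$ is bounded by the reciprocal of the common inradius and has support in a fixed ball, dominated convergence also gives $\nabla\ell_{P_k}\to\nabla\ell_P$ in $L^p$, whence $\ell_{P_k}\to\ell_P$ in $\sobo{1}{p}$. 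Continuity of $z$ then makes $\mu_s$ continuous, a fortiori upper semicontinuous; I expect this verification to be the main obstacle. Theorem~\ref{thm:HP} now yields $z(s\ell_P)=c_0(s)+c_1(s)\abs{P}+c_2(s)\abs{P^\ast}$. Fix $P_0\in\poly{n}$. The identities $\abs{\lambda P_0}=\lambda^n\abs{P_0}$ and $S_p(\lambda P_0)=\lambda^{n-p}S_p(P_0)$ together with the norm computations of Section~\ref{sec:P} show $\ell_{\lambda P_0}\to0$ in $\sobo{1}{p}$ as $\lambda\to0^+$, so $z(s\ell_{\lambda P_0})\to z(0)=0$; but $\abs{\lambda P_0}\to0$ and $\abs{(\lambda P_0)^\ast}=\lambda^{-n}\abs{P_0^\ast}\to\infty$, forcing $c_2(s)=0$ and then $c_0(s)=0$. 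Thus $z(s\ell_P)=c(s)\abs{P}$ with $c:=c_1$, and $c$ is continuous since $c(s)=z(s\ell_{P_0})/\abs{P_0}$, $z$ is continuous, and $s\mapsto s\ell_{P_0}$ is continuous into $\sobo{1}{p}$.

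Finally, to obtain \eqref{eqn:gc}, by Remark~\ref{thm:bigo} it suffices to show $c(s)=o(s^ph(s))$ as $s\to0$ and $c(s)=o(s^{p^\ast}h(s))$ as $s\to\infty$ for every $h$ tending to $\infty$ (negative $s$ being identical). If the first failed, there would be $\delta>0$ and $s_k\to0$ with $\abs{c(s_k)}\geq\delta\abs{s_k}^ph(s_k)$; taking $f_k$ to be a sum of $N_k$ translates of $s_k\ell_{P_0}$ with pairwise disjoint supports, one has $z(f_k)=N_kc(s_k)\abs{P_0}$ (the valuation property and $z(0)=0$ make $z$ additive over functions with pairwise disjoint supports, and $z$ is translation invariant) and $\normk{\sobo{1}{p}}{f_k}^p=N_k\abs{s_k}^p\normk{\sobo{1}{p}}{\ell_{P_0}}^p$, so choosing $N_k$ of order $\bigl(\abs{s_k}^p\sqrt{h(s_k)}\bigr)^{-1}$ makes $f_k\to0$ in $\sobo{1}{p}$ while $\abs{z(f_k)}$ grows like $\sqrt{h(s_k)}\to\infty$, contradicting continuity of $z$ at $0$. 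If instead $c(s)=o(s^{p^\ast}h(s))$ failed along $s_k\to\infty$, I would use the single function $f_k=s_k\ell_{\lambda_k P_0}$ with $\lambda_k=s_k^{-p^\ast/n}h(s_k)^{-1/(2n)}$; by the two scaling identities above and the relation $p^\ast(n-p)/n=p$, one checks $\normk{\sobo{1}{p}}{f_k}\to0$ whereas $\abs{z(f_k)}=\abs{c(s_k)}\lambda_k^n\abs{P_0}$ grows like $\sqrt{h(s_k)}\to\infty$, again contradicting continuity at $0$. Hence $c$ satisfies \eqref{eqn:gc}, completing the proof.
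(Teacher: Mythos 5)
Your proof is correct and follows essentially the same strategy as the paper's: reduce to an $\SL{n}$ invariant valuation $P\mapsto z(s\ell_P)$ on $\poly{n}$, invoke Theorem \ref{thm:HP}, and then use test sequences tending to $0$ in $\sobo{1}{p}$ together with continuity of $z$ at $0$ and Remark \ref{thm:bigo} to eliminate $c_0,c_2$ and establish \eqref{eqn:gc}. The differences are only in the choice of test sequences and are all valid: you verify the (semi)continuity hypothesis of Theorem \ref{thm:HP} explicitly where the paper defers to \cite{Lud12}, you kill $c_0$ and $c_2$ with a single shrinking polytope instead of the paper's sum of disjoint shrinking copies, and for the growth at $0$ you use many disjoint translates of a fixed function where the paper rescales a single polytope.
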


\begin{proof}
    Similar to the proof of Lemma 5 in \cite{Lud12}.
    Define the functional $Z:\poly{n}\rightarrow\R$ by setting
    \[Z(P)=z(s\ell_P),\]
    for every $s\in\R$ and $\ell_P\in\pap{n}$.
    If $\ell_P,\ell_Q\in\pap{n}$ are such that $\ell_P\vee\ell_Q\in\pap{n}$, then $\ell_P\vee\ell_Q=\ell_{P\cup Q}$ and
    $\ell_P\wedge\ell_Q=\ell_{P\cap Q}$. Since $z$ is a valuation on $\lpa$, it follows that
    \begin{eqnarray*}
        Z(P)+Z(Q) &=& z(s\ell_P)+z(s\ell_Q)\\
        &=& z(s(\ell_P\vee\ell_Q))+z(s(\ell_P\wedge\ell_Q))\\
        &=& Z(P\cup Q)+Z(P\cap Q),
    \end{eqnarray*}
    for $P,Q,P\cup Q\in\poly{n}$.
    Thus, $Z:\poly{n}\rightarrow\R$ is a valuation.

    By Theorem \ref{thm:HP}, there exist $c_0,c_1,c_2\in\R$ depending now on $s$ such that
    \begin{equation}\label{eqn:hp}
        z(s\ell_P)=c_0(s)+c_1(s)\abs{P}+c_2(s)\abs{P^\ast},
    \end{equation}
    for all  $s\in\R$ and $\ell_P\in\pap{n}$.
    We now investigate the behavior of these constants by studying valuations
    on different $s\ell_P$'s and their translations, for $s\in\R$.

    We start with $c_0$ and $c_2$.

    \textbf{Example 1.}
    Let $P\in\poly{n}$. Take translations $\tau_1,\ldots,\tau_k$ such that
    the $\phi_iP$'s are pairwise disjoint, where $\phi_iP=\tau_i\left(P/k^i\right)$.
    Consider the function $f_k=s\left(\ell_{\phi_1P}\vee\cdots\vee\ell_{\phi_kP}\right)$, $s\in\R$.
    Then, we have
    \begin{eqnarray*}
        \normk{p}{f_k}^p &=& \abs{s}^p\sum_{i=1}^k\int_{\phi_iP}\ell_{\phi_iP}^p=\abs{s}^p\sum_{i=1}^k\int_{\phi_iP}\left(\ell_P\left(\phi_i^{-1}x\right)\right)^pdx\\
        &=& \abs{s}^p\sum_{i=1}^kk^{-in}\int_P\ell_P^p=\abs{s}^p\normk{p}{\ell_P}^p\sum_{i=1}^kk^{-in}\rightarrow0\mbox{ as }k\rightarrow\infty.
    \end{eqnarray*}
    Further,
    \begin{eqnarray*}
        \normk{p}{\nabla f_k}^p &=& \abs{s}^p\sum_{i=1}^k\int_{\phi_iP}\abs{\nabla\ell_{\phi_iP}}^p=\abs{s}^p\sum_{i=1}^k\int_{\phi_iP}\abs{\nabla\left(\ell_P\circ\phi_i^{-1}\right)}^p\\
        &=& \abs{s}^p\sum_{i=1}^k\int_{\phi_iP}\abs{\phi_i^{-t}\nabla\ell_P\left(\phi_i^{-1}x\right)}^pdx=\abs{s}^p\sum_{i=1}^kk^{ip}\int_{\phi_iP}\abs{\nabla\ell_P\left(\phi_i^{-1}x\right)}^pdx\\
        &=& \abs{s}^p\sum_{i=1}^kk^{-i(n-p)}\normk{p}{\nabla\ell_P}^p\rightarrow0\mbox{ as }k\rightarrow\infty.
    \end{eqnarray*}
    Thus, $f_k\rightarrow0$ in $\sobo{1}{p}$ as $k\rightarrow\infty$.

    By the translation invariance of $z$ and \eqref{eqn:hp}, we have
    \begin{eqnarray*}
        z(f_k) &=& \sum_{i=1}^kz(s\ell_{P/k^i})=\sum_{i=1}^k\left(c_0(s)+\frac{c_1(s)}{k^{in}}\abs{P}+c_2(s)k^{in}\abs{P^\ast}\right)\\
        &=& kc_0(s)+c_1(s)\abs{P}\sum_{i=1}^kk^{-in}+c_2(s)\abs{P^\ast}\sum_{i=1}^kk^{in}\rightarrow0\mbox{ as }k\rightarrow\infty.
    \end{eqnarray*}
    Therefore, $c_2(s)$ has to vanish as the geometric series diverges,
    as well as $c_0(s)$, for every $s\in\R$.

    Now, let's further determine $c_1$ by two different examples.

    \textbf{Example 2.}
    For each function $f$ with $\lim_{x\rightarrow0}f(x)=\infty$,
    let $P\in\poly{n}$ and $P_k=P\left(k^p/f(1/k)\right)^\frac{1}{n}$, for $k=1,2,\ldots$.
    Then, we have
    \[\normk{p}{\ell_{P_k}/k}^p=c_{n,p}k^{-p}\abs{P_k}=c_{n,p}\abs{P}/f(1/k)\rightarrow0 \mbox{ as } k\rightarrow\infty\]
    and
    \[\normk{p}{\nabla\ell_{P_k}/k}^p=\frac{1}{n}k^{-p}S_p(P_k)=\frac{1}{n}S_p(P)k^{-\frac{p^2}{n}}\left(f(1/k)\right)^\frac{p-n}{n}\rightarrow0 \mbox{ as } k\rightarrow\infty.\]
    Thus, $\ell_{P_k}/k\rightarrow0$ in $\sobo{1}{p}$ as $k\rightarrow\infty$.

    By \eqref{eqn:hp}, we obtain
    \[z\left(\ell_{P_k}/k\right)=c_1(1/k)k^p\abs{P}/f(1/k)\rightarrow0 \mbox{ as } k\rightarrow\infty.\]
    Therefore, $c_1(1/k)\sim o\left(f(1/k)/k^p\right)$ as $k\rightarrow\infty$.
    Similarly, considering $-\ell_{P_k}/k$, we obtain the same estimate as $x\rightarrow0^-$.
    Hence, $c_1(x)\sim o(x^pf(x))$ as $x\rightarrow0$.
    It follows that $c_1(x)\sim O(x^p)$ as $x\rightarrow0$ via Remark \ref{thm:bigo}.

    \textbf{Example 3.}
    For each function $f$ with $\lim_{x\rightarrow\infty}f(x)=\infty$,
    let $P\in\poly{n}$ and $P_k=P\big/\left(k^{p^\ast}f(k)\right)^\frac{1}{n}$, for $k=1,2,\ldots$.
    Then, we have
    \[\normk{p}{k\ell_{P_k}}^p=c_{n,p}k^p\abs{P_k}=c_{n,p}k^{p-p^\ast}\left(f(k)\right)^{-1}\abs{P}\rightarrow0 \mbox{ as } k\rightarrow\infty\]
    and
    \[\normk{p}{\nabla k\ell_{P_k}}^p=\frac{1}{n}k^pS_p(P_k)=\frac{1}{n}S_p(P)\left(f(k)\right)^\frac{p-n}{n}\rightarrow0 \mbox{ as } k\rightarrow\infty.\]
    Thus, $k\ell_{P_k}\rightarrow0$ in $\sobo{1}{p}$ as $k\rightarrow\infty$.

    By \eqref{eqn:hp}, we obtain
    \[z\left(k\ell_{P_k}\right)=c_1\left(k\right)k^{-p^\ast}\left(f(k)\right)^{-1}\abs{P}\rightarrow0 \mbox{ as } k\rightarrow\infty.\]
    Therefore, $c_1\left(k\right)\sim o(k^{p^\ast}f(k))$ as $k\rightarrow\infty$.
    Similarly, considering $-k\ell_{P_k}$, we obtain the same estimate as $x\rightarrow-\infty$.
    Hence, $c_1(x)\sim o\left(x^{p^\ast}f(x)\right)$ as $x\rightarrow\infty$.
    It follows that $c_1(x)\sim O(x^{p^\ast})$ as $x\rightarrow\infty$ via Remark \ref{thm:bigo}.
\end{proof}


Now we are ready to prove the result on homogeneous valuations.
\begin{proof}[Proof of Theorem \ref{thm:homo}]
    The backwards direction has already been shown in Theorem \ref{thm:phi}.

    We now consider the forward direction.
    In the light of Lemma \ref{thm:L1p}, it suffices to consider the case
    $f=s\ell_P$ for every $s\in\R$ and $\ell_P\in\pap{n}$.
    In this case, due to Lemma \ref{thm:P1p}, there exists a continuous function $c:\R\rightarrow\R$
    satisfying \eqref{eqn:gc} such that
    \begin{equation}\label{eqn:hp1}
        z(s\ell_P)=c(s)\abs{P}
    \end{equation}
    for every $s\in\R$ and $\ell_P\in\pap{n}$.
    On the other hand, by homogeneity, there exists a constant $c\in\R$ such that
    \begin{equation}\label{eqn:hp2}
        z(s\ell_P)=c\abs{s}^q\abs{P}
    \end{equation}
    for every $s\in\R$ and $\ell_P\in\pap{n}$.
    Formulas \eqref{eqn:hp1} and \eqref{eqn:hp2} yield
    \begin{equation}\label{eqn:hp3}
        c(s)=c\abs{s}^q
    \end{equation}
    for every $s\in\R$.

    For $q<p$ or $q>p^\ast$, since $c(s)$ satisfies \eqref{eqn:gc},
    which is impossible with the expression \eqref{eqn:hp3},
    we have $c=0$. It follows that $z(s\ell_P)=0$
    for every $s\in\R$ and $\ell_P\in\pap{n}$.

    For $p\leq q\leq p^\ast$, set $\tilde{c}={{n+q} \choose q}c$.
    By properties of the beta and the gamma function
    and the layer cake representation, we have
    \begin{eqnarray*}
        c(s) &=& \tilde{c}\abs{s}^q{{n+q} \choose q}^{-1}\\
        &=& \tilde{c}q\abs{s}^q\frac{\Gamma(q)\Gamma(n+1)}{\Gamma(n+q+1)}\\
        &=& \tilde{c}q\abs{s}^q\int_0^1t^{q-1}(1-t)^ndt\\
        &=& \tilde{c}q\int_0^1(\abs{s}t)^{q-1}(1-t)^nd\abs{s}t\\
        &=& \tilde{c}q\int_0^{\abs{s}}t^{q-1}\left(\frac{\abs{s}-t}{\abs{s}}\right)^ndt.
    \end{eqnarray*}
    Thus,
    \begin{eqnarray*}
        c(s)\abs{P} &=& \tilde{c}q\int_0^{\abs{s}}t^{q-1}\abs{\set{\abs{s}\ell_P>t}}dt\\
        &=& \tilde{c}\int_{\R^n}(\abs{s}\ell_P(x))^qdx\\
        &=& \tilde{c}\normk{q}{s\ell_P}^q.
    \end{eqnarray*}
\end{proof}


\section{A more general characterization}\label{sec:final}

We finish the proof of Theorem \ref{thm:main} by the following crucial representation.

\begin{lem}\label{thm:FLC}
    Let the functional $z:\lpa\rightarrow\R$ satisfy $z(0)=0$
    and let $s\mapsto z(sf)$ belong to $\B$ for $s\in\R$ and $\ell_P\in\pap{n}$.
    If there exists a continuous function
    $c:\R\rightarrow\R$ satisfying \eqref{eqn:gc} such that
    \[z(s\ell_P)=c(s)\abs{P},\]
    for every $s\in\R$ and $\ell_P\in\pap{n}$,
    then there exists a continuous function $h\in\G$ such that
    \[z(s\ell_P)=\int_{\R^n}h\circ(s\ell_P).\]
\end{lem}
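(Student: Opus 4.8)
The key identity to exploit is \eqref{eqn:fune}, which says that $s^n z(s\ell_P) = |P|\int_0^s (s-t)^n\, d\nu_s(t)$ once we know $z(s\ell_P)=c(s)|P|$; here I want to run the computation in the proof of Theorem~\ref{thm:phi} (step~6 and~7) \emph{backwards}. Concretely, since $z(s\ell_P)=c(s)|P|$ for every $P$, I can define $\tilde c(s) = s^n c(s)$ and observe that the hypothesis $s\mapsto z(s\ell_P)\in\B$ forces $\tilde c\in C^n(\R)$ with $\tilde c^{(n)}\in\lbv$. The goal is to produce a single continuous $h\in\G$ with $c(s)=\frac{1}{|P|}\int_{\R^n} h\circ(s\ell_P) = \int_0^s\bigl(\tfrac{s-t}{s}\bigr)^n\,dh(t)$ (using the layer-cake representation of $\int_{\R^n}h\circ(s\ell_P)$ exactly as in Theorem~\ref{thm:phi}), i.e. $\tilde c(s) = s^n c(s) = \int_0^s (s-t)^n\, dh(t)$ for $s>0$, and the analogous formula built from $h^e,h^o$ for $s<0$.

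**The construction of $h$.** Differentiating the target identity $\tilde c(s)=\int_0^s(s-t)^n\,dh(t)$ formally $n$ times in $s$ gives $\tilde c^{(n)}(s) = n!\int_0^s dh(t) = n!\,h(s)$ (for $s>0$, with $h(0)=0$ forced by $\tilde c$ vanishing to order $n+1$ at $0$, which in turn comes from $c$ satisfying \eqref{eqn:gc} near $0$). So the natural definition is
\[
    h(s) = \frac{1}{n!}\,\tilde c^{(n)}(s) = \frac{1}{n!}\,\frac{d^n}{ds^n}\bigl(s^n c(s)\bigr),\qquad s>0,
\]
and symmetrically on $s<0$ after splitting $c=c^e+c^o$ as in step~6 of Theorem~\ref{thm:phi}; the hypothesis $s\mapsto z(s\ell_P)\in\B$ is exactly what guarantees $h$ is well-defined, continuous, and locally $BV$. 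Then I must verify two things: (i) with this $h$, the identity $s^n c(s)=\int_0^s(s-t)^n\,dh(t)$ actually holds — this is an $n$-fold integration by parts / Taylor-with-integral-remainder argument, where the boundary terms at $t=0$ all vanish because $\tilde c^{(k)}(0)=0$ for $0\le k\le n-1$ (again from \eqref{eqn:gc} at $0$, via Remark~\ref{thm:bigo} applied to $x^k c^{(\text{lower})}$ the way step~7 does); and (ii) $h\in\G$, i.e. $h$ satisfies the growth \eqref{eqn:gc} at $0$ and $\infty$. For (ii) at $\infty$: $h(s)=\frac{1}{n!}(s^nc(s))^{(n)}$ expands by the Leibniz rule into $\sum_j \binom{n}{j}\frac{n!}{(n-j)!\,n!} s^{j} c^{(n-j)}(s)\cdot(\text{stuff})$ — more precisely into a combination of $s^k c^{(k)}(s)$-type terms — and the defining property of $\B$ is precisely that each such term satisfies \eqref{eqn:gc}; at $0$ one argues the same way, using $\tilde c^{(k)}(0)=0$ and the estimates near $0$.

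**The final assembly and the main obstacle.** Having produced continuous $h\in\G$ with $z(s\ell_P)=c(s)|P| = \int_{\R^n} h\circ(s\ell_P)$ for all $s>0$ and all $\ell_P\in\pap{n}$, the case $s<0$ follows by applying the construction to $c^o$ and $c^e$ separately and reassembling, exactly mirroring the computation displayed in step~6 of the proof of Theorem~\ref{thm:phi}; the case $s=0$ is $z(0)=0=\int h\circ 0$ since $h(0)=0$. I expect the \textbf{main obstacle} to be the careful justification of the $n$-fold integration by parts when $h$ is merely locally $BV$ (so $dh$ is a signed measure, not $h'(t)\,dt$), together with pinning down that \emph{all} boundary terms at $t=0$ vanish: this requires knowing $\tilde c^{(k)}(s)\to 0$ as $s\to 0$ for each $0\le k\le n-1$, which is not immediate from $c\in C^n$ alone but has to be extracted from the growth condition \eqref{eqn:gc} on $c$ near $0$ combined with the membership $s\mapsto z(s\ell_P)\in\B$, in the spirit of Remark~\ref{thm:bigo}. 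A secondary point requiring care is that $h$ must be the \emph{same} function for every $\ell_P$, which is automatic here because $h$ is defined purely from $c$, and $c$ itself is $P$-independent by hypothesis — so the valuation property has already done its work in Lemma~\ref{thm:P1p} and need not be revisited.
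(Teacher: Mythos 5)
Your proposal is correct and takes essentially the same route as the paper: the function $h(s)=\frac{1}{n!}\frac{d^n}{ds^n}\bigl(s^nc(s)\bigr)$ is, by the Leibniz rule, exactly the paper's $h(s)=\sum_{j=0}^n\frac{1}{j!}\binom{n}{j}s^jc^{(j)}(s)$, and the growth verification via the $\B$-hypothesis on the terms $s^kc^{(k)}(s)$ is identical (the paper verifies the identity $s^nc(s)=\int_0^s(s-t)^n\,d\nu(t)$ by integrating $h$ up $n$ times and computing $h_n$ two ways with Fubini, rather than by your $n$-fold integration by parts, but these are the same computation read in opposite directions). The one obstacle you flag is not actually an obstacle: the boundary terms $\tilde c^{(k)}(0)$ for $k\le n-1$ vanish automatically, since each is a sum of terms $s^{n-k+j}c^{(j)}(s)$ with $n-k+j\ge 1$ and $c^{(j)}$ continuous at $0$, so no appeal to \eqref{eqn:gc} is needed there.
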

\begin{proof}
    It suffices to consider the case $s>0$.
    Since there exists a continuous function $c:\R\rightarrow\R$ satisfying \eqref{eqn:gc}, such that
    \[z(s\ell_P)=c(s)\abs{P},\]
    we have
    \[Dz_{\ell_P}(s)=c'(s)\abs{P}.\]
    It follows that $c(s)$ is continuously differentiable in the usual sense.
    Hence $c(s)\in C^{n}(\R)$, due to $s\mapsto z(sf)$ belonging to $C^n(\R)$
    for every $s\in\R$ and $f\in\pap{n}$. Moreover,
    \begin{equation}\label{eqn:diff}
        D^\a z_{\ell_P}(s)=c^{(\a)}(s)\abs{P},
    \end{equation}
    for every non-negative integer $\a\leq n$ and $c^{(n)}\in\lbv$.

    Now, let
    \begin{equation}\label{eqn:int}
        h(s)=\sum_{j=0}^n\frac{1}{j!}{n\choose j}s^jc^{(j)}(s).
    \end{equation}
    We show by induction that there exists a signed measure $\nu$ on $\R$ such that
    \[c(s)=\int_0^s\left(\frac{s-t}{s}\right)^nd\nu(t).\]
    Since $c\in C^n(\R)$ and $c^{(n)}\in\lbv$,
    there exists a signed measure $\nu$ on $\R$ such that
    $h(s)=\nu([0,s))$ for every $s\geq0$.
    Let $h_1(s)=\int_0^sh(x)dx$.
    Then, by Fubini's theorem, we obtain
    \begin{eqnarray*}
        h_1(s) &=& \int_0^s\int_0^xd\nu(t)dx\\
        &=& \int_0^s\int_t^sdxd\nu(t)\\
        &=& \int_0^s(s-t)d\nu(t).
    \end{eqnarray*}
    Let $k\geq2$ and $h_k(s)=\int_0^sh_{k-1}(x)dx$.
    Assume $h_k(x)=\frac{1}{k!}\int_0^x(x-t)^kd\nu(t)$.
    Applying Fubini's theorem again gives
    \begin{eqnarray*}
        h_{k+1}(s) &=& \frac{1}{k!}\int_0^s\int_0^x(x-t)^kd\nu(t)dx\\
        &=& \frac{1}{k!}\int_0^s\int_t^s(x-t)^kdxd\nu(t)\\
        &=& \frac{1}{(k+1)!}\int_0^s(s-t)^{k+1}d\nu(t).
    \end{eqnarray*}
    Thus, in particular, we have
    \[h_n(s)=\frac{1}{n!}\int_0^s(s-t)^nd\nu(t).\]
    On the other hand, by \eqref{eqn:int}, we have
    \begin{eqnarray*}
        h(x) &=& c(x)+\frac{1}{n!}x^nc^{(n)}(x)+
        \sum_{j=1}^{n-1}\frac{1}{j!}\left({{n-1}\choose j}+{{n-1}\choose{j-1}}\right)x^jc^{(j)}(x)\\
        &=& \sum_{j=0}^{n-1}\frac{1}{j!}{{n-1}\choose j}x^jc^{(j)}(x)
        +\sum_{j=0}^{n-1}\frac{1}{(j+1)!}{{n-1}\choose j}x^{j+1}c^{(j+1)}(x)\\
        &=& \sum_{j=0}^{n-1}\frac{1}{(j+1)!}{{n-1}\choose j}\left(x^{j+1}c^{(j)}(x)\right)'.
    \end{eqnarray*}
    Hence,
    \[h_1(s)=\int_0^sh(x)dx=\sum_{j=0}^{n-1}\frac{1}{(j+1)!}{{n-1}\choose j}s^{j+1}c^{(j)}(s).\]
    Assume that $h_k(x)=\sum_{j=0}^{n-k}\frac{1}{(j+k)!}{{n-k}\choose j}x^{j+k}c^{(j)}(x)$.
    Similarly, we obtain
    \begin{eqnarray*}
        h_k(x) &=& \frac{1}{k!}x^kc(x)+\frac{1}{n!}x^nc^{(n-k)}(x)\\
        &&+\sum_{j=1}^{n-k-1}\frac{1}{(j+k)!}\left({{n-k-1}\choose j}+{{n-k-1}\choose{j-1}}\right)x^{j+k}c^{(j)}(x)\\
        &=& \sum_{j=0}^{n-k-1}\frac{1}{(j+k)!}{{n-k-1}\choose j}x^{j+k}c^{(j)}(x)\\
        &&+\sum_{j=0}^{n-k-1}\frac{1}{(j+k+1)!}{{n-k-1}\choose j}x^{j+k+1}c^{(j+1)}(x)\\
        &=& \sum_{j=0}^{n-k-1}\frac{1}{(j+k+1)!}{{n-k-1}\choose j}\left(x^{j+k+1}c^{(j)}(x)\right)'.
    \end{eqnarray*}
    It follows that
    \[h_{k+1}(s)=\int_0^sh_k(x)dx=\sum_{j=0}^{n-(k+1)}\frac{1}{(j+k+1)!}{{n-(k+1)}\choose j}s^{j+k+1}c^{(j)}(s).\]
    Thus, in particular, we have $h_n(s)=\frac{1}{n!}s^nc(s)$.
    Therefore, by the layer cake representation, we have
    \begin{eqnarray*}
        z(s\ell_P) = c(s)\abs{P} &=& \int_0^s\left(\frac{s-t}{s}\right)^n\abs{P}d\nu(t)\\
        &=& \int_0^s\abs{\set{s\ell_P>t}}d\nu(t)\\
        &=& \int_{\R^n}h\circ(s\ell_P).
    \end{eqnarray*}

    Furthermore, for fixed $P\in\poly{n}$,
    \[s^k\Dzlp{k}=s^kc^{(k)}(s)\abs{P}\]
    satisfies \eqref{eqn:gc} for every integer $0\leq k\leq n$.
    Therefore, as defined in \eqref{eqn:int},
    $h$ also satisfies \eqref{eqn:gc}.
\end{proof}

Theorem \ref{thm:main} follows as an immediate corollary of
Theorem \ref{thm:phi} and Lemmas \ref{thm:L1p}, \ref{thm:P1p}, and \ref{thm:FLC}.

\bigskip

\textbf{Acknowledgement.}
The author wishes to thank the referees for valuable suggestions
and careful reading of the original manuscript.
The work of the author was supported in part by Austrian Science Fund (FWF) Project P23639-N18
and the National Natural Science Foundation of China Grant No. 11371239.



\end{document}